\newcommand{\bi}{\mathbf{i}}
\newcommand{\bm}{\mathbf{m}}
\newcommand{\bC}{\mathbf{C}}
\newcommand{\bR}{\mathbf{R}}
\newcommand{\N}{\mathbb{N}}
\newcommand{\E}{\mathbb{E}}
\newcommand{\G}{\mathcal{G}}
\newcommand{\F}{\mathcal{F}}
\newcommand{\R}{\mathbb{R}}
\newcommand{\C}{\mathbb{C}}
\newtheorem{tm}{Theorem}[section]
\newtheorem{rk}{Remark}
\newtheorem{ap}{Assumption}
\begin{document}

\title{\Large\bf Parareal exponential $\theta$-scheme for longtime simulation of stochastic Schr\"{o}dinger equations with weak damping}

       \author{
        {Jialin Hong\footnotemark[1], Xu Wang\footnotemark[2], Liying Zhang\footnotemark[3], }
        }
       \maketitle
       \footnotetext{\footnotemark[1]\footnotemark[2]J. Hong and X. Wang are supported by the National Natural Science Foundation of China (No.91530118, No.91130003, No.11021101, No.91630312 and No.11290142). LSEC, ICMSEC, Academy of Mathematics and Systems Science, Chinese Academy of Sciences, Beijing 100190, P.R.China./
School of Mathematical Sciences, University of Chinese Academy of Sciences, Beijing 100049, P.R.China}
\footnotetext{\footnotemark[3]L. Zhang is supported by National Natural Science Foundation of China (No.11601514). Department of Mathematics, College of Sciences,  China University of Mining and Technology, Beijing 100083, P.R.China.}
        \footnotetext{\footnotemark[2]Corresponding author: wangxu@lsec.cc.ac.cn.}
        
        \begin{abstract}
A parareal algorithm based on an exponential $\theta$-scheme is proposed for the stochastic Schr\"odinger equation with weak damping and additive noise. 
It proceeds as a two-level temporal parallelizable integrator with the exponential $\theta$-scheme as the propagator on the coarse grid. 
The proposed algorithm in the linear case increases the convergence order from one to $k$ for $\theta\in[0,1]\setminus\{\frac12\}$. In particular, the convergence order increases to $2k$ when $\theta=\frac12$ due to the symmetry of the algorithm.  Furthermore, the algorithm is proved to be suitable for longtime simulation based on the analysis of the invariant distributions for the exponential $\theta$-scheme.
The convergence condition for longtime simulation is also established for the proposed algorithm in the nonlinear case, which indicates the superiority of implicit schemes.
Numerical experiments are dedicated to illustrate the best choice of the iteration number $k$, as well as the convergence order of the algorithm for different choices of $\theta$.

\textbf{AMS subject classification:} 
60H35, 65M12, 65W05

\textbf{Key Words: }
stochastic Schr\"odinger equation, parareal algorithm, exponential $\theta$-scheme, invariant measure
\end{abstract}

\section{Introduction}

In the numerical approximation for both deterministic and stochastic evolution equations, several methods have been developed to improve the convergence order of classical schemes, such as (partitioned) Runge-Kutta methods, schemes via modified equations, predictor-corrector schemes and so on (see \cite{BV16,HSW17,ML67,R09} and references therein).
For high order numerical approximations of stochastic partial differential equations (SPDEs), the computing cost can be prohibitively large due to the high dimension in space, especially for longtime simulations. 
It motivates us to study algorithms allowing for parallel implementations to obtain a significant improvement of efficiency.

The parareal algorithm was pioneered in \cite{lions01} as a time discretization of a deterministic partial differential evolution equation on finite time intervals, and was then modified in \cite{MT02} to tackle non-differential evolution equations.
This algorithm is described through a coarse propagator calculated on a coarse grid with step size $\delta T$ and a fine propagator calculated in parallel on each coarse interval with step size $\delta t=\delta T/J$, where $J\in\N_+$ denotes the number of available processors. 
It is pointed out in \cite{lions01} and \cite{MT02} that the error caused by the parareal architecture after a few iterations is comparable to the error caused by a global use of the fine propagator without iteration. 
More specifically, for a fixed iterated step $k\in\N_+$, the parareal algorithm could show order $kp$ with respect to $\delta T$, if a scheme with local truncation error $O(\delta T^{p+1})$ is chosen as the coarse propagator and the exact flow is chosen as the fine propagator. 
Over the past few years, the parareal algorithm has been further studied by \cite{B05,SR05} on its stability, by \cite{gander2014,gander2007} on the potential of longtime simulation, and by \cite{bal2006,E09} on the application to stochastic problems.

When exploring parareal algorithms for stochastic differential equations (SDEs) driven by standard Brownian motions, one of the main differences from the deterministic case is that the stochastic systems are less regular than the deterministic ones. 
Moreover, the convergence order of classical schemes such as explicit Euler scheme, implicit Euler scheme and midpoint scheme, when applied to SDEs, are in general half of those in deterministic case. 
The circumstance becomes even worse when SPDEs are taken into consideration since the temporal regularity of the solution may be worse.
One may not get the optimal convergence rate of the parareal algorithm for the stochastic case following the procedure of the deterministic case.
The author in \cite{bal2006} deals with this problem for SDEs adding assumptions on drift and diffusion coefficients as well as their derivatives, and considers the parareal algorithm when the explicit Euler scheme is chosen as the coarse propagator.
The optimal rate $\frac{k}2(\alpha\wedge3-1)$ is deduced taking advantages of the independency between the increments of Brownian motions,  where $\alpha$ variant for different drift and diffusion coefficients and $\alpha=2$ in general. 

For the stochastic nonlinear Schr\"odinger equation considered in this paper, there are two main obstacles when establishing implementable parareal algorithms for longtime simulation.
One is that the stiffness caused by the noise 
makes it unavailable to construct parareal algorithms based on existing stable schemes (see e.g. \cite{wang2017}).
It may require higher regularity assumptions due to the iteration adopted in parareal algorithms, see Remark \ref{highregularity}. 
These assumptions are usually not satisfied by SPDEs. 
The other one is that the $\C$-valued nonlinear coefficient does not satisfy one-sided Lipschitz type conditions in general. 
It leads to strict restrictions on the scale of the coarse grid, especially for explicit numerical schemes, when one wishes to get uniform convergence rate.

In this paper, we propose an exponential $\theta$-scheme based parareal algorithms with $\theta\in[0,1]$. 
It allow us to perform the iteration without high regularity assumptions on the numerical solution taking advantages of the semigroup generated by the linear operator of the considered model.
For the linear case with $\theta\in[\frac12,1]$, the exponential $\theta$-scheme possesses a unique invariant Gaussian distribution, which converges to the invariant measure of the exact solution.  
This type of absolute stability ensures the uniform convergence of the proposed parareal algorithm with order $k$ for $\theta>\frac12$ and $2k$ for $\theta=\frac12$. 
If $\theta\in[0,\frac12)$ and the damping $\alpha>0$ is large enough, the uniform convergence still holds. Otherwise, the algorithm is only suitable for simulation over finite time interval, which coincide with the fact that the distribution of the exponential $\theta$ scheme diverges over longtime in this case, see Section \ref{sec3.2}.
For the nonlinear case, we take the proposed algorithm with $\theta=0$ as a keystone to illustrate the convergence analysis for fully discrete schemes with the fine propagator being a numerical solver as well. 
This result is only available over bounded time interval.
To get a time-uniform estimate, internal stage values are utilized in the analysis for the nonlinear case with general $\theta\in[0,1]$. The results give the convergence condition on $\theta$, $L_F$, $\alpha$ and $\delta T$, and indicate that the restriction on $\alpha$ and $\delta T$ is weaker when $\theta$ gets larger.

The paper is organized as follows. 
Section \ref{sec2} introduces some notations and assumptions used in the subsequent sections, and gives a brief recall about parareal algorithms. 
Section \ref{sec3} is dedicated to analyze the stability of the parareal exponential $\theta$-scheme by investigating the distribution of the exponential $\theta$-scheme over longtime. 
The rate of convergence for both unbounded and bounded intervals is given for the linear case. 
Section \ref{sec4} focus on the application of the proposed parareal algorithm for the nonlinear case as well as the fully discrete scheme based on the the parareal algorithm. 
Moreover, some modifications are made on the parareal algorithm to release the conditions under which the proposed scheme converges by iteration. 
This improvement is also illustrated through numerical experiments in Section \ref{sec5}.

\section{Preliminaries}
\label{sec2}
We consider the following initial-boundary problem of the stochastic nonlinear Schr\"odinger equation driven by additive noise:
\begin{equation}\label{model}
\begin{aligned}
	&du=\left({\bi}\Delta u-\alpha u+\bi F(u)\right)dt+Q^{\frac12}dW,\\
	&u(t,0)=u(t,1)=0, \quad\quad t\in(0,T],\\
	&u(0,x)=u_0(x), \quad\quad x\in[0,1],
\end{aligned}
\end{equation}
where $\alpha\ge0$ is the damping coefficient and $W(t)$ is an  cylindrical Wiener process defined on the completed filtered probability space $(\Omega,\mathcal{B}, \mathbb{P}, \{\mathcal{B}\}_{t\ge 0})$.  The Karhunen--Lo\`eve expansion of $W$ yields
\begin{align*}
W(t)=\sum_{m=1}^{\infty}e_m(x)\beta_m(t),\quad t\in [0, T],\; x\in[0,1],
\end{align*}
where $\{\beta_m(t)\}_{m\in\N}$ is a family of mutually independent identically distributed $\mathbb{C}$-valued Brownian motions. 

\subsection{Notations}
Throughout this paper, we denote by $H:=L^2(0,1)$ the square integrable space, and denote by $H_0$ the space $H$ with homogenous Dirichlet boundary condition for simplicity.
Then $\{e_m(x)\}_{m\in\N}:=\{\sqrt{2}\sin(m\pi x)\}_{m\in\mathbb{N}}$ is an eigenbasis of the Dirichlet Laplacian in $H$, and the associated eigenvalues of the linear operator $\Lambda:=-\bi\Delta+\alpha$ are expressed as  $\{\lambda_m\}_{m\in\mathbb{N}}:= \left\{{\bf {i}}(m\pi)^2+\alpha\right\}_{m\in\mathbb{N}}$ with $1\le|\lambda_m|\to \infty$ as $m\to\infty$.
Furthermore, we denote the inner product in $H$ by
\begin{align*}
\langle v_1,v_2\rangle:=\int_0^1v_1(x)v_2(x)dx,\quad v_1,v_2\in H.
\end{align*}
In the sequel, we will use the following space
\begin{align*}
\dot{H}^s:=D(\Lambda^{\frac s2})=\left\{u\bigg{|}u=\sum_{m=1}^{\infty}\langle u,e_m\rangle e_m\in H_0,~s.t.,~ \sum_{m=1}^{\infty}|\langle u,e_m\rangle |^2|\lambda_m|^s< \infty
\right\},
\end{align*}
equipped with the norm
\begin{align*}
\|u\|^2_{\dot H^s}=\sum_{m=1}^{\infty}|\langle u,e_m\rangle |^2|\lambda_m|^s,
\end{align*}
which is equivalent to the Sobolev norm $\|\cdot\|_{H^s}$ when $s=0,1,2$.
We use the notation $\|\cdot\|$ instead of $\|\cdot\|_H$ for convenience.

For the nonlinear function $F$ and operator $Q$ in \eqref{model}, we give the following assumptions.
\begin{ap}\label{ap1}
There exists a positive constant $L_F$ such that
\begin{align*}
\|F(v)-F(w)\|\le L_F\|v-w\|,\quad \forall~v,w\in H.
\end{align*}
In addition, $F(0)=0$ and
\begin{align*}
\Im\langle\overline{v},F(v)\rangle=0,\quad\forall~v\in H.
\end{align*}
\end{ap}
\begin{ap}\label{ap2}
Assume that $Q$ is a nonnegative symmetric operator on $H$ with $(-\Delta)^{\frac{s}2}Q^{\frac12}\in\mathcal{L}(H)$ for some $s\ge0$.
\end{ap}

For any $s\ge0,$ the Hilbert--Schmidt norm of operator $Q^{\frac 1 2}$ is defined as 
\begin{align*}
\|Q^{\frac{1}{2}}\|^2_{\mathcal{HS}(H,\dot{H}^s)}
:=\sum_{m=1}^{\infty}\|Q^{\frac{1}{2}}e_m\|_{\dot H^s}^2
=\|(-\Delta^{\frac{s}2})Q^{\frac{1}{2}}\|^2_{\mathcal{HS}(H,H)}.
\end{align*}

Let $S(t):= e^{-t\Lambda}$ be the semigroup generated by  operator $\Lambda$. The mild solution of (\ref{model}) exists globally under Assumptions \ref{ap1} and \ref{ap2} with the following form
\begin{align}\label{mild}
u(t)=S(t)u_0+\bi \int_{0}^{t}S(t-s)F(u)ds+\int_{0}^{t}S(t-s)Q^{\frac 1 2}dW(s).
\end{align}
For any $0\le r\le l$, it holds 
\begin{align*}
\|S(t)\|_{\mathcal{L}(\dot{H}^l,\dot{H}^r)}
:=\sup_{v\in\dot H^s}\frac{\|S(t)v\|_{\dot H^r}}{\|v\|_{\dot H^l}}
\le e^{-\alpha t}.
\end{align*}

\subsection{Framework of parallelization in time}
In this section, we briefly recall the procedure of parareal algorithms, which are constructed through the interaction of a coarse and a fine propagators under different time scales. The parareal algorithm, or equivalently the time-parallel algorithm, consists of four parts in general: interval partition, initialization, time-parallel computation, and correction. The numerical solution is expected to converge fast by iteration to the solution of a global use of fine propagator $\F$. 
\subsubsection{Interval partition}
The considered interval $[0,T]$ is first divided into $N$ parts with a uniform coarse step size $\delta T=T_{n}-T_{n-1}$ for any $n=1,\cdots,N$ as follows.

\vspace{1em}
\begin{tikzpicture}[xscale = 6.5]
\centering{
\draw [white] (0.15,0) -- (0.3,0);
\draw [very thick] (0.3,0) -- (1.8,0);
\node[align = center, above] at (1.05, 0.3)
        {$\delta T$};
\draw [thick] (0.3, 0) node [below] {$T_0=0$} to (0.3, 0.2);
\draw [thick] (0.6, 0) node [below] {} to (0.6, 0.2);
\draw [thick] (0.9, 0) node [below] {$T_{n-1}$} to (0.9, 0.2);
\draw [thick] (1.2, 0) node [below] {$T_{n}$} to (1.2, 0.2);
\draw [thick] (1.5, 0) node [below] {} to (1.5, 0.2);
\draw [thick] (1.8, 0) node [below] {$T_{N}=T$} to (1.8, 0.2);
\draw[thick,decorate,decoration={brace,amplitude=3pt},xshift=0pt,yshift=1.2pt]
(0.9,0.2) -- (1.2,0.2) node [black,midway,xshift=15pt]{};
}
\end{tikzpicture}

Each subinterval is further divided into $J$ parts with a uniform fine step size $\delta t=t_{n,j+1}-t_{n,j}=\frac{\delta T}{J}$ for any $n=0,\cdots,N-1$ and $j=1,\cdots,J-1$.
It satisfies that $t_{n-1,0}=T_{n-1}$ and $t_{n-1,J}=:t_{n,0}$.

\vspace{1em}
\begin{tikzpicture}[xscale = 6]
\draw [very thick] (0.3,0) -- (1.8,0);
\node[align = center, above] at (1.05, 0.3)
        {$\delta t$};
\draw [thick] (0.3, 0) node [below] {$t_{n-1,0}=T_{n-1}$} to (0.3, 0.2);
\draw [thick] (0.6, 0) node [below] {} to (0.6, 0.2);
\draw [thick] (0.9, 0) node [below] {$t_{n-1,j}$} to (0.9, 0.2);
\draw [thick] (1.2, 0) node [below] {$t_{n-1,j+1}$} to (1.2, 0.2);
\draw [thick] (1.5, 0) node [below] {} to (1.5, 0.2);
\draw [thick] (1.8, 0) node [below] {$t_{n-1,J}=T_{n}$} to (1.8, 0.2);
\draw[thick,decorate,decoration={brace,amplitude=3pt},xshift=0pt,yshift=1.2pt]
(0.9,0.2) -- (1.2,0.2) node [black,midway,xshift=15pt]{};
\end{tikzpicture}

If the value at the coarse grid $\{T_n\}_{n=0}^N$ is given, denoted by $\{u_n\}_{n=0}^N$, the numerical solutions at the fine grid $\{t_{n-1,j}\}_{j=1}^J$ on each subinterval $[T_{n-1},T_{n}]$ can be calculated independently by choosing $u_{n-1}$ as the initial value over the subinterval.

\subsubsection{Initialization}
We define a coarse propagator $\G$
\begin{align}\label{G}
u_{n}=\G(T_{n},T_{n-1},u_{n-1})
\end{align}
based on some specific scheme to gain a numerical solution $\{u_n\}_{n=0}^N$ at coarse grid $\{T_n\}_{n=0}^N$.

The coarse propagator $\G$ gives a rough approximation on the coarse grid $\{T_n\}_{n=0}^N$, which makes it possible to calculate the numerical solutions on each subinterval parallel to one another. 
In general, $\G$ is required to be easy to calculate and need not to be of high accuracy. On the other hand, the fine propagator $\F$ defined on each subinterval is assumed to be more accurate than $\G$ to ensure that the proposed parareal algorithm is accurate enough.

\subsubsection{Time-parallel computation}
We consider the subinterval $[T_{n-1},T_{n}]$ with initial value $u_{n-1}$ at $T_{n-1}$, and apply a fine propagator $\F$ over this subinterval.  
More precisely, we denote by $\hat u_{n-1,1}:=\F(t_{n-1,1},t_{n-1,0},u_{n-1})$ the one step approximation obtained by $\F$ starting from $u_{n-1,0}:=u_{n-1}$ at time $t_{n-1,0}:=T_{n-1}$, see Figure \ref{fig1}.
Thus, the numerical solution at time $t_{n,j}$ can be expressed as
\begin{align*}
\hat u_{n-1,j}=\F(t_{n-1,j},t_{n-1,j-1},\hat u_{n-1,j-1})=\F(t_{n-1,j},t_{n-1,0},\hat u_{n-1,0}),\quad \forall~j=1,\cdots,J.
\end{align*}
For $j=J$, we get $\hat u_{n-1,J}=\F(T_{n},T_{n-1},u_{n-1})$ which is $\mathcal{B}_{T_{n}}$-adapted.

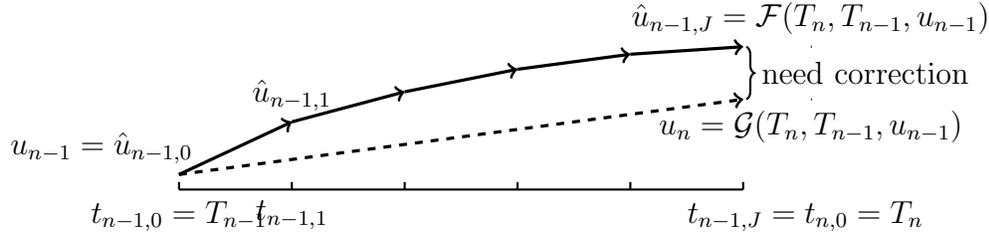
\begin{figure}[h]
\begin{tikzpicture}[xscale = 1.5]
\draw[thick] (0,-0.2) to  (5,-0.2);
\draw[->, dashed,very thick] (0,0) to  (5,1);
\draw[->, very thick] (0,0) to  (1,0.7);
\draw[->, very thick] (1,0.7) to  (2,1.1);
\draw[->, very thick] (2,1.1) to (3,1.4);
\draw[->, very thick] (3,1.4) to (4,1.6);
\draw[->, very thick] (4,1.6) to (5,1.7);
\draw [thick] (0, -0.2) node [below] {$t_{n-1,0}=T_{n-1}$} to (0, -0.1);
\draw [thick] (5, -0.2) node [below] {$\quad\quad\quad\quad t_{n-1,J}=t_{n,0}=T_{n}$} to (5, -0.1);
\draw [thick] (1, -0.2) node [below] {$t_{n-1,1}$} to (1, -0.1);
\draw [thick] (2, -0.2) node [below] {} to (2, -0.1);
\draw [thick] (3, -0.2) node [below] {} to (3, -0.1);
\draw [thick] (4, -0.2) node [below] {} to (4, -0.1);

\draw [thick] (0, 0) node [above] {$u_{n-1}=\hat{u}_{n-1,0}~~~~~~~~~~~~~~~$} to (0, 0);
\draw [thick] (5.6, 1) node [below] {$u_{n}=\G(T_{n},T_{n-1},u_{n-1})$} to (5.6, 1);
\draw [thick] (1, 0.7) node [above] {$\hat{u}_{n-1,1}$} to (1,0.7);
\draw [thick] (5.6, 1.7) node [above] {$\hat{u}_{n-1,J}=\F(T_{n},T_{n-1},u_{n-1})$} to (5.6,1.7);
\draw[thick,decorate,decoration={brace,amplitude=3pt,mirror},xshift=0.8pt,yshift=0pt]
(5,1) -- (5,1.7) node [black,midway,xshift=45pt]{need correction};
\end{tikzpicture}
\caption{Numerical solutions obtained by $\F$ and $\G$ on $[T_{n-1},T_{n}]$}
\label{fig1}
\end{figure}

\subsubsection{Correction}
Note that we get two numerical solutions $u_{n}$ and $\hat u_{n-1,J}$ at time $T_{n}$ from above procedure, which are not equal to each other in general, see Figure \ref{fig1}. Some correction should be applied to get a family of numerical solution on the grid $\{T_n\}_{n=0}^N$ such that it is more accurate than the one obtained by $\G$.
The correction iteration (see also \cite{bal2006,gander2014,gander2007}) is defined as
\begin{equation}\label{scheme}
\begin{aligned}
u_{n}^{(0)}=&\G(T_{n},T_{n-1},u_{n-1}^{(0)})\\
u_{n}^{(k)}=&\G(T_{n},T_{n-1},u_{n-1}^{(k)})+\F(T_{n},T_{n-1},u_{n-1}^{(k-1)})-\G(T_{n},T_{n-1},u_{n-1}^{(k-1)}),\quad k\in\N_+
\end{aligned}
\end{equation}
starting from $u_0^{(k)}=u_0$ for all $k\in\N$.
The solution $\{u_n^{(k)}\}_{0\le n\le N}\subset H$ of \eqref{scheme} is obtained after the calculation of $\{u_n^{(k-1)}\}_{0\le n\le N}$, and is $\{\mathcal{B}_{T_n}\}_{0\le n\le N}$-adapted for any $k\in\N$.

\section{Parareal exponential $\theta$-scheme for the linear case}
\label{sec3}
This section is devoted to study parareal algorithms based on the exponential $\theta$-scheme for the following linear equation
\begin{align}\label{linearmodel}
du=(\bi\Delta u-\alpha u+\bi\lambda u)dt+Q^\frac12dW
\end{align}
with $\lambda\in\R$. We show that the proposed parareal algorithms are valid for longtime simulation with a unique invariant Gaussian distribution under some restrictions on $\theta\in[0,1]$.

Rewriting above equation through its components $u^{m}:=\langle u,e_m\rangle $,  we obtain
\begin{align*}
du^{m}=(-\lambda_m+\bi\lambda)u^{m}dt+\sum_{i=1}^\infty\langle Q^\frac12e_i,e_m\rangle d\beta_i,\quad m=1,\cdots,M.
\end{align*}
Its solution is given by an Ornstein--Uhlenbeck process
\begin{align*}
u^{m}(t)=e^{(-\lambda_m+\bi\lambda)t}u^{m}(0)+\sum_{i=1}^\infty\int_0^te^{(-\lambda_m+\bi\lambda)(t-s)}\langle Q^\frac12e_i,e_m\rangle d\beta_i(s)
\end{align*}
with $u^{m}(0)=\langle u_0,e_m\rangle$. 

\subsection{Complex invariant Gaussian measure}
Note that $\{u^{m}(t)\}_{t\ge0}$ satisfies a complex Gaussian distribution $\mathcal{N}(\bm,\bC,\bR)$ defined by its mean $\bm$, covariance $\bC$ and relation $\bR$:
\begin{align*}
\bm\left(u^{m}(t)\right):=&\E\left[u^{m}(t)\right]=e^{(-\lambda_m+\bi\lambda)t}\bm\left[u^{m}(0)\right],\\
\bC\left(u^{m}(t)\right):=&\E\left|u^{m}(t)-\bm\left(u^{m}(t)\right)\right|^2=e^{-2\alpha t}\bC\left(u^{m}(0)\right)+\frac{1-e^{-2\alpha t}}{\alpha}\|Q^\frac12e_m\|^2,\\
\bR\left(u^{m}(t)\right):=&\E\left(u^{m}(t)-\bm\left(u^{m}(t)\right)\right)^2=e^{2(-\lambda_m+\bi\lambda) t}\bR\left(u^{m}(0)\right).
\end{align*}
We use the notation $\mu^{m}_t:=\mathcal{N}(\bm(u^{m}(t)),\bC(u^{m}(t)),\bR(u^{m}(t)))$ for simplicity.

\begin{rk}\label{indep}
We consider a one-dimensional $\C$-valued Gaussian random variable $Z=\bf  a+\bi b$ with $\bf a$ and $\bf b$ being two $\R$-valued Gaussian random variables. If its relation vanishes, i.e.,
\begin{align*}
\bR(Z)=\E|{\bf a}-\E {\bf a}|^2-\E|{\bf b}-\E {\bf b}|^2+2\bi(\E[{\bf ab}]-\E {\bf a}\E {\bf b})=0,
\end{align*}
it implies $\E|{\bf a}-\E {\bf a}|^2=\E|{\bf b}-\E {\bf b}|^2$ and $\E[{\bf ab}]=\E {\bf a}\E {\bf b}$. Since $\bf a$ and $\bf b$ are both Gaussian, we obtain equivalently that $\bf a$ and $\bf b$ are independent with the same covariance.
\end{rk}
\begin{rk}\label{chara}
The characteristic function of a one-dimensional complex Gaussian variable $Z$ with distribution $\nu=\mathcal{N}(\bm,\bC,\bR)$ reads (see e.g. \cite{andersen1995})
\begin{align*}
\hat\nu(c):=&\E[\exp\{\bi\Re(\bar c Z)\}]=\int_{\C}\exp\{\bi\Re(\bar c z)\}\nu(dz)\\
=&\exp\left\{\bi\Re(\bar{c}\bm)-\frac14(\bar{c}\bC c+\Re(\bar{c}\bR\bar{c}))\right\},\quad c\in\C.
\end{align*}
It can be generalized for the infinite dimensional case utilizing inner product in $H$:
\begin{align*}
\hat\nu( w):=\exp\left\{\bi\Re\langle\bar w,\bm\rangle-\frac14\left(\langle\bC\bar w, w\rangle+\Re\langle\bR\bar w,\bar w\rangle\right)\right\},\quad  w\in H.
\end{align*}
\end{rk}

Hence, we get that the unique invariant measure of \eqref{linearmodel} is a complex Gaussian distribution, which is stated in the following theorem. We refer to \cite{debussche2005,EKZ17} and references therein for the existence of invariant measures for the nonlinear case, and refer to \cite{BV16,DZ96} and references therein for other types of SPDEs.
\begin{tm}\label{IM}
Assume that Assumption \ref{ap2} holds with $s=0$. 
The solution $u$ in \eqref{linearmodel} possesses a unique invariant measure 
\begin{align*}
\mu_{\infty}=\mathcal{N}\left(0,\frac1\alpha Q,0\right).
\end{align*} 
\end{tm}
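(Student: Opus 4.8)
The plan is to exploit the spectral decomposition already set up in the excerpt. Since the modes $u^{m}(t)=\langle u(t),e_m\rangle$ evolve as independent complex Ornstein--Uhlenbeck processes with the explicit mean, covariance and relation displayed above, it suffices to analyse each mode and then reassemble the full law on $H$ through the characteristic function of Remark \ref{chara}. I would establish the two assertions, invariance and uniqueness, separately, deriving both from the single structural fact that $\Re(-\lambda_m+\bi\lambda)=-\alpha<0$ for every $m$ (using $\lambda_m=\bi(m\pi)^2+\alpha$ and the weak damping $\alpha>0$).

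For invariance, I would take the initial law of the $m$-th mode to be $\mathcal N(0,\frac1\alpha\|Q^\frac12e_m\|^2,0)$, i.e.\ $\bm(u^{m}(0))=0$, $\bC(u^{m}(0))=\frac1\alpha\|Q^\frac12e_m\|^2$, $\bR(u^{m}(0))=0$, and substitute into the three evolution formulas. The mean and the relation stay zero because they are multiplied by $e^{(-\lambda_m+\bi\lambda)t}$ and $e^{2(-\lambda_m+\bi\lambda)t}$ respectively, while the covariance is preserved because the combination $e^{-2\alpha t}\cdot\frac1\alpha\|Q^\frac12e_m\|^2+\frac{1-e^{-2\alpha t}}\alpha\|Q^\frac12e_m\|^2$ collapses to $\frac1\alpha\|Q^\frac12e_m\|^2$. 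Since the modes are mutually independent and the product measure $\bigotimes_m\mathcal N(0,\frac1\alpha\|Q^\frac12e_m\|^2,0)$ is exactly $\mathcal N(0,\frac1\alpha Q,0)$, this shows that $\mu_\infty$ is preserved by the flow. I would note here that $\mu_\infty$ is a genuine centered Gaussian measure on $H$ because its covariance $\frac1\alpha Q$ is trace class under Assumption \ref{ap2} with $s=0$.

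For uniqueness, I would show that the law $\mu_t$ of $u(t)$ started from any initial datum converges weakly to $\mu_\infty$ as $t\to\infty$. Using Remark \ref{chara}, I would write $\hat\mu_t(w)$ in terms of $\bm(u(t))$, $\bC(u(t))$, $\bR(u(t))$ and let $t\to\infty$: the mean and relation contributions decay like $e^{-\alpha t}$ and $e^{-2\alpha t}$, while $\bC(u^{m}(t))\to\frac1\alpha\|Q^\frac12e_m\|^2$, so $\hat\mu_t(w)\to\hat\mu_\infty(w)$ for every $w\in H$, giving weak convergence by the L\'evy continuity theorem. A standard argument then closes uniqueness: if $\nu$ is any invariant measure, the law started from $\nu$ equals $\nu$ for all $t$ yet also converges to $\mu_\infty$, forcing $\nu=\mu_\infty$.

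The main obstacle I anticipate is the passage from the mode-by-mode statements to genuine infinite-dimensional convergence. Concretely, I must justify interchanging the limit $t\to\infty$ with the infinite sum over $m$ appearing in the exponent of $\hat\mu_t(w)$ and control the covariance and relation terms uniformly in $m$; this needs a dominated-convergence bound resting on $\sum_m\|Q^\frac12e_m\|^2<\infty$, which is precisely where the trace-class hypothesis ($s=0$ in Assumption \ref{ap2}) together with the uniform damping $\Re(-\lambda_m+\bi\lambda)=-\alpha$ enters. Once this summability is in hand, the remaining estimates are routine.
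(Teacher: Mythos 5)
Your proposal is correct and follows essentially the same route as the paper: decompose into Fourier modes, treat each $u^m$ as a complex Ornstein--Uhlenbeck process, and pass to the limit of the characteristic functions of $\mu^m_t$ as in Remark \ref{chara}. In fact your version is somewhat more complete than the paper's, which only verifies the mode-wise convergence $\hat\mu^m_t\to\hat\mu^m_\infty$ and leaves both the direct invariance check and the interchange of the limit $t\to\infty$ with the sum over $m$ (your dominated-convergence step using $\sum_m\|Q^{\frac12}e_m\|^2<\infty$) implicit.
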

\begin{proof}
Based on Remark \ref{indep}, we define 
\begin{align*}
u^{m}_{\infty}=\frac{\|Q^\frac12e_m\|}{\sqrt{2\alpha}}(\xi_m+\bi r_m)
\end{align*} 
with $\{\xi_m,r_m\}_{m\in\N}$ being independent standard $\R$-valued normal random variables, i.e., $\xi_m,r_m\sim\mathcal{N}(0,1)$. Apparently,
\begin{align*}
u^{m}_{\infty}\sim\mathcal{N}\left(0,\frac{\|Q^\frac12e_m\|^2}{\alpha},0\right)=:\mu^{m}_{\infty}.
\end{align*}

We claim that the following random variable has the distribution $\mu_{\infty}$: 
\begin{align*}
u_{\infty}:=\sum_{m=1}^\infty u^{m}_{\infty}e_m=\sum_{m=1}^\infty\frac{\|Q^\frac12e_m\|}{\sqrt{2\alpha}}(\xi_m+\bi r_m)e_m.
\end{align*}  
Compared with $u(t)=\sum_{m=1}^\infty u^{m}(t)e_m$,
it then suffices to show that the distribution $\mu^{m}_t$ of $u^m(t)$ converges to $\mu^{m}_{\infty}$.
As a result of Remark \ref{chara}, the characteristic function of $\mu^{m}_t$ is 
\begin{align*}
\hat\mu^{m}_t(c)=&\exp\Bigg\{\bi\Re(\bar ce^{(-\lambda_m+\bi\lambda)t}\E\left[u^{m}(0)\right])-\frac14\Re\left(e^{2(-\lambda_m+\bi\lambda) t}\bR\left(u^{m}(0)\right)\bar c^2\right)\\
&-\frac14\left(e^{-2\alpha t}\bC\left(u^{m}(0)\right)+\frac{1-e^{-2\alpha t}}{\alpha}\|Q^\frac12e_m\|^2|c|^2\right)\Bigg\}
\end{align*}
and $\hat\mu^{m}_t(c)\to\exp\{-\frac{\|Q^\frac12e_m\|^2}{4\alpha}|c|^2\}=\hat\mu^m_{\infty}(c).$
\end{proof}

\subsection{Parareal exponential $\theta$-scheme}
\label{sec3.2}
In this section, we construct a parareal algorithm based on the exponential $\theta$-scheme as the coarse propagator. We show that proposed parareal algorithm converges to the solution generated by the fine propagator $\F$ as $k\to\infty$.

We first define the exponential $\theta$-scheme applied to \eqref{linearmodel}:
\begin{align*}
u_n=S(\delta T)u_{n-1}+\bi(1-\theta)\lambda\delta TS(\delta T)u_{n-1}+\bi\theta\lambda\delta Tu_n+S(\delta T)Q^{\frac12}\delta_nW,
\end{align*}
or equivalently,
\begin{align}\label{linearG}
u_n=(1+\bi(1-\theta)\lambda\delta T)S_{\theta}S(\delta T)u_{n-1}+S_{\theta}S(\delta T)Q^{\frac12}\delta_nW=:\G_\theta(T_n,T_{n-1},u_{n-1})
\end{align}
with $S_\theta:=(1-\bi\theta\lambda\delta T)^{-1}$, $\theta\in[0,1]$ and $\delta_nW:=W(T_n)-W(T_{n-1})$.
The initial value of the numerical solution is the same as the initial value of the exact solution, and apparently $\{u_n\}_{n=0}^N$ is $\{\mathcal{B}_{T_n}\}_{n=0}^N$-adapted.

The distribution of $\{u_n\}_{n=0}^N$ can also be calculated in the same procedure as Theorem \ref{IM} by rewriting the Fourier components $u_{n}^m:=\langle u_{n},e_m\rangle$ of $u_{n}$ as
\begin{align*}
u_{n}^m=&(1+\bi(1-\theta)\lambda\delta T)S_\theta e^{-\lambda_m\delta T} u_{n-1}^m+S_\theta e^{-\lambda_m\delta T}\sum_{i=1}^\infty\langle Q^\frac12e_i,e_m\rangle\delta_{n}\beta_i\\
=&\eta^ne^{-\lambda_m\delta Tn}u_0^m
+S_\theta e^{-\lambda_m\delta T}\sum_{j=0}^{n-1}\eta^je^{-\lambda_m\delta Tj}\sum_{i=1}^\infty\langle Q^\frac12e_i,e_m\rangle\delta_{n-j}\beta_i
\end{align*}
with 
\begin{align*}
\eta:=(1+\bi(1-\theta)\lambda\delta T)S_\theta=\frac{1+\bi(1-\theta)\lambda\delta T}{1-\bi\theta\lambda\delta T}.
\end{align*}
Then according to the independence of $\{\delta_{n-j}\beta_i\}_{1\le j\le n-1,i\ge1}$ and $\E|\delta_{n-j}\beta_i|^2=2\delta T$, we derive the distribution of $u_{n}^m$ defined by its mean, covariance and relation:
\begin{align*}
\bm(u_{n}^m)=&\eta^{n}e^{-\lambda_m\delta Tn}\E[u_0^m],\\
\bC(u_{n}^m)=&|\eta|^{2n}e^{-2\alpha\delta Tn}\bC(u_0^m)\\
&+\left(\left(1+\theta^2\lambda^2\delta T^2\right)e^{2\alpha\delta T}\right)^{-1}\frac{1-\tilde\eta^n}{1-\tilde\eta}\|Q^\frac12e_m\|^2(2\delta T),\\
\bR(u_{n}^m)=&\eta^{2n}e^{-2\lambda_m\delta Tn}\bR(u_0^m),
\end{align*}
where 
\begin{align*}
\tilde\eta:=\frac{1+(1-\theta)^2\lambda^2\delta T^2}{\left(1+\theta^2\lambda^2\delta T^2\right)e^{2\alpha\delta T}}=|\eta|^2e^{-2\alpha\delta T}
\end{align*}
is called the {\it stable function} here.
\begin{figure}[h]
\centering
\subfigure{
\begin{minipage}[t]{0.31\linewidth}
  \includegraphics[height=3.5cm,width=4cm]{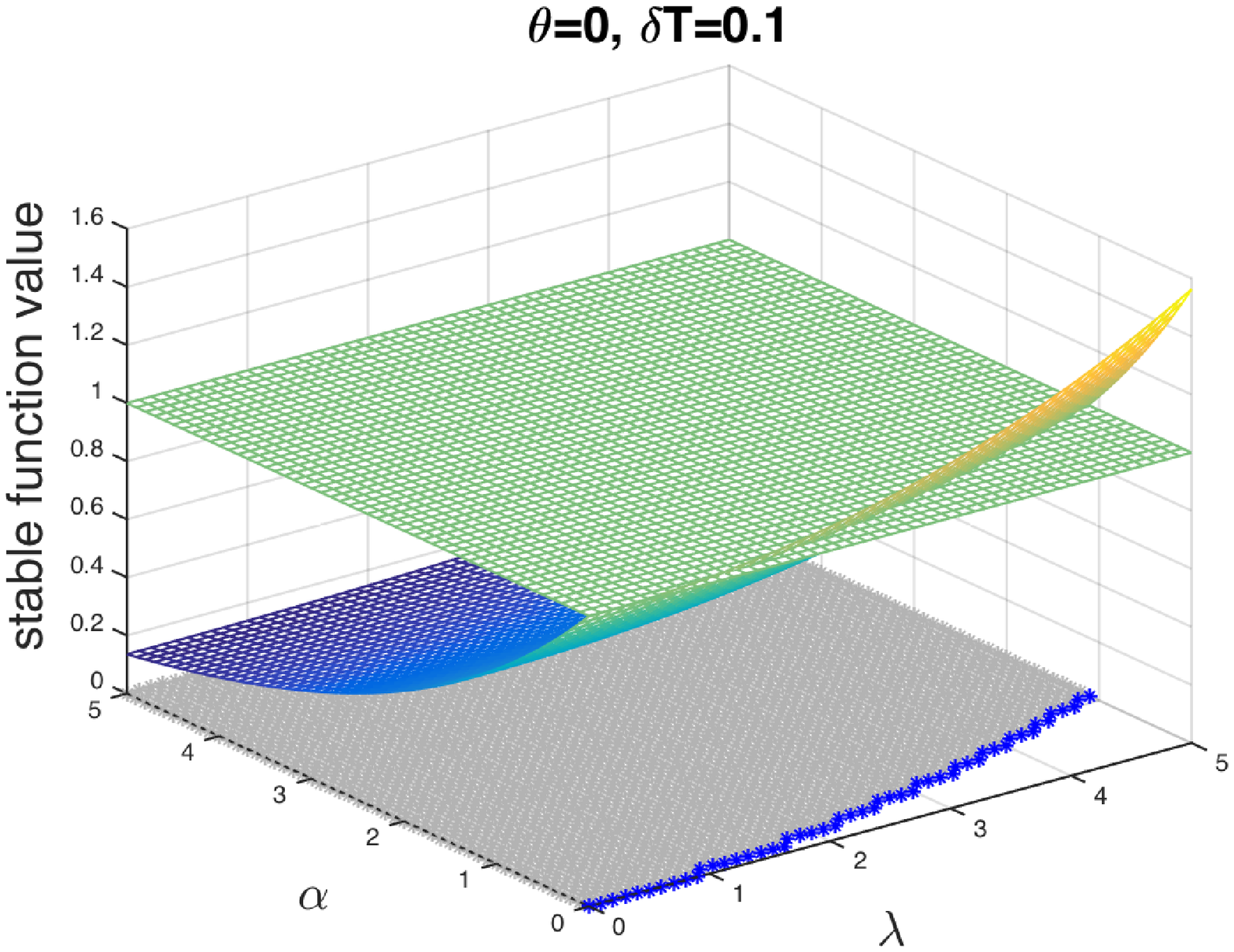}
  \end{minipage}
  }
  \subfigure{
  \begin{minipage}[t]{0.306\linewidth}
  \includegraphics[height=3.5cm,width=4cm]{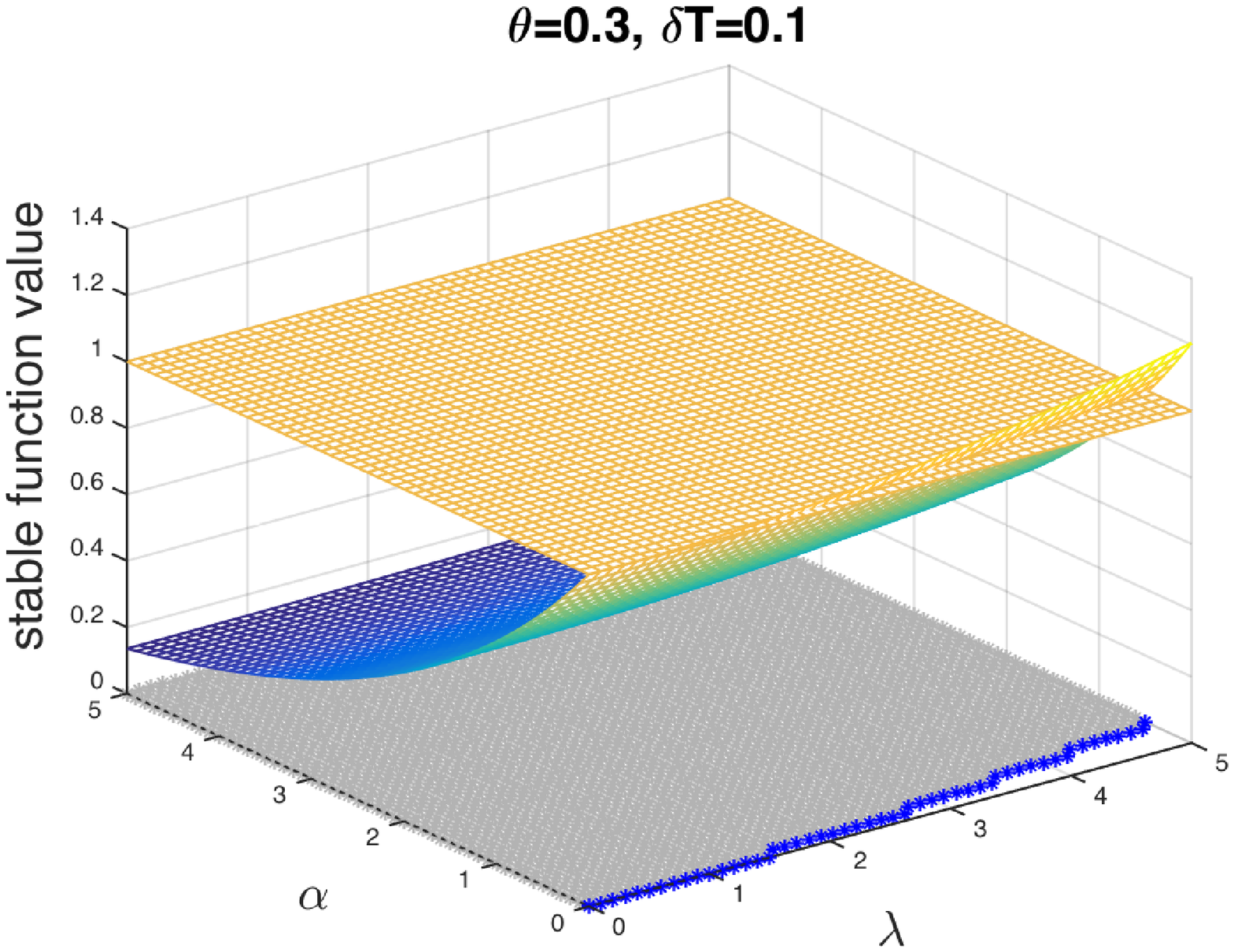}
  \end{minipage}
  }
    \subfigure{
  \begin{minipage}[t]{0.31\linewidth}
  \includegraphics[height=3.5cm,width=4cm]{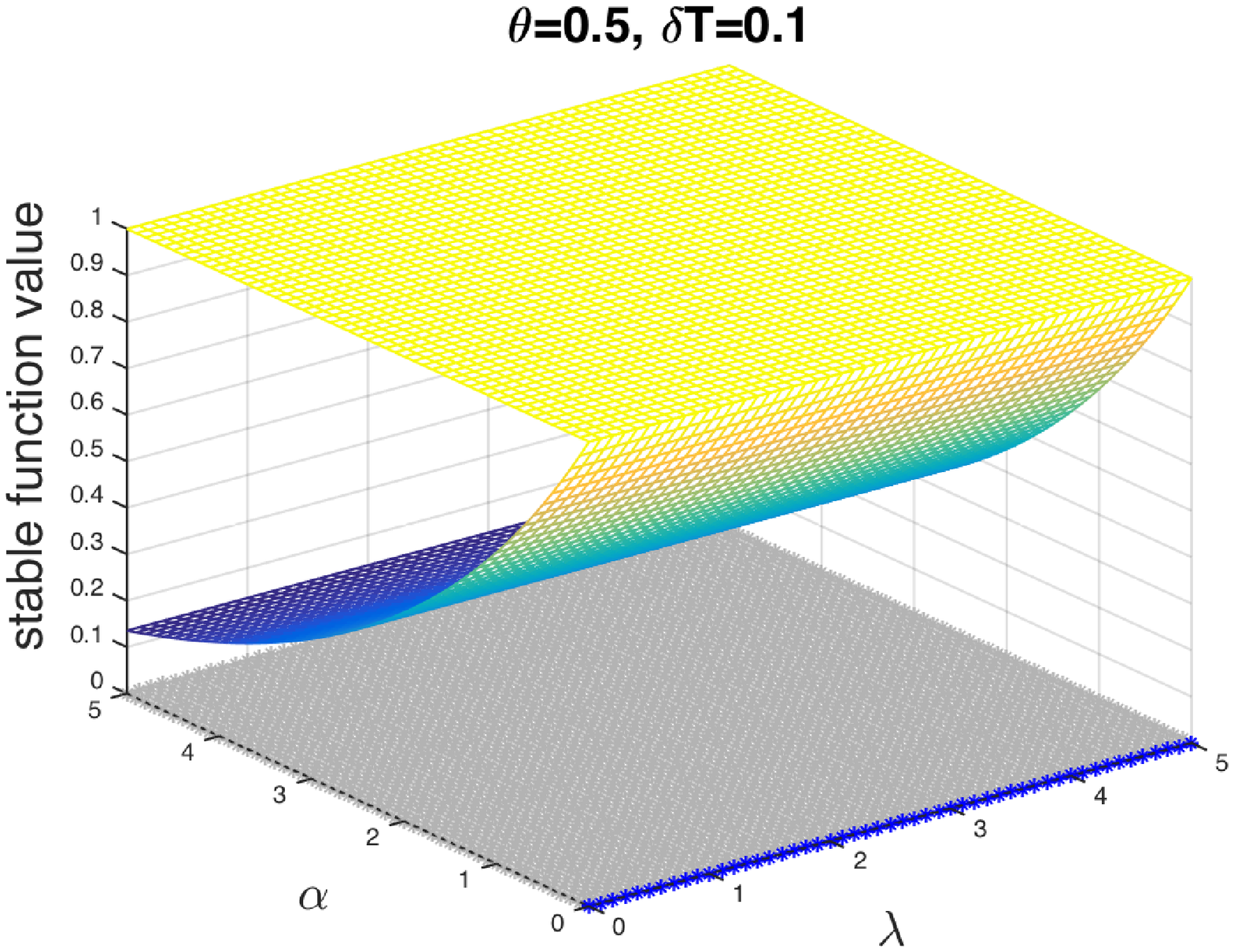}
  \end{minipage}
  }\\
  \subfigure{
\begin{minipage}[t]{0.31\linewidth}
  \includegraphics[height=3.5cm,width=4cm]{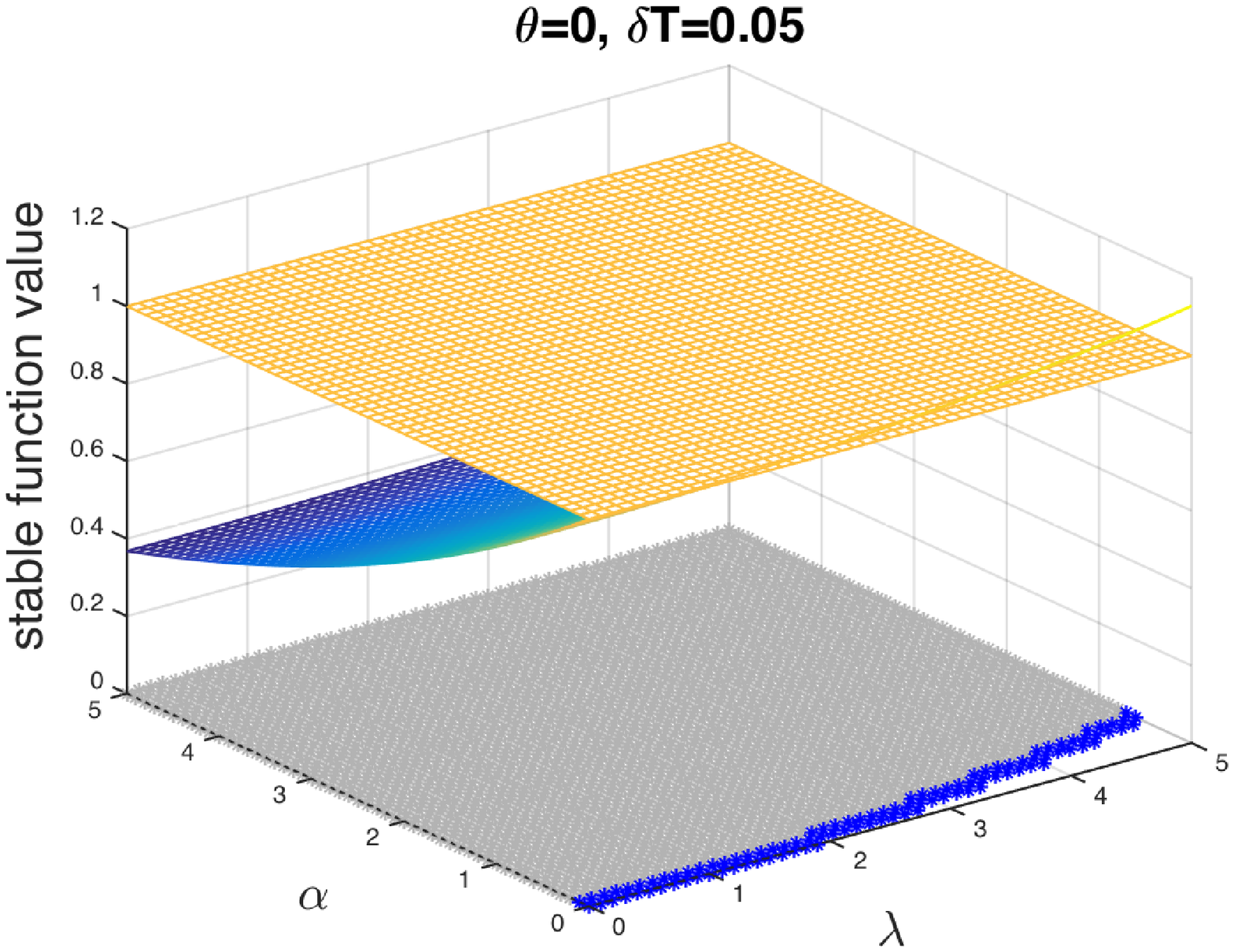}
  \end{minipage}
  }
  \subfigure{
  \begin{minipage}[t]{0.306\linewidth}
  \includegraphics[height=3.5cm,width=4cm]{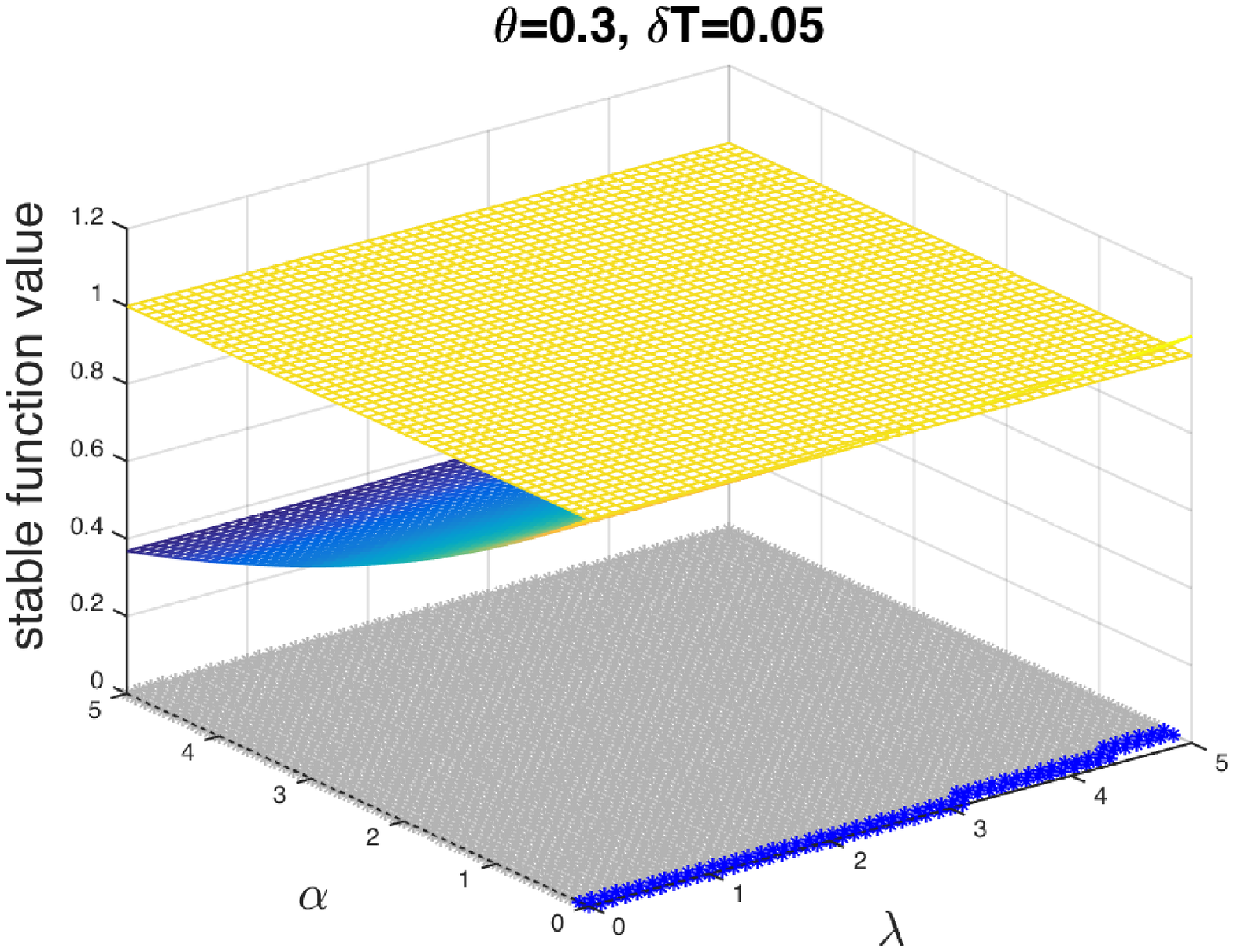}
  \end{minipage}
  }
    \subfigure{
  \begin{minipage}[t]{0.31\linewidth}
  \includegraphics[height=3.5cm,width=4cm]{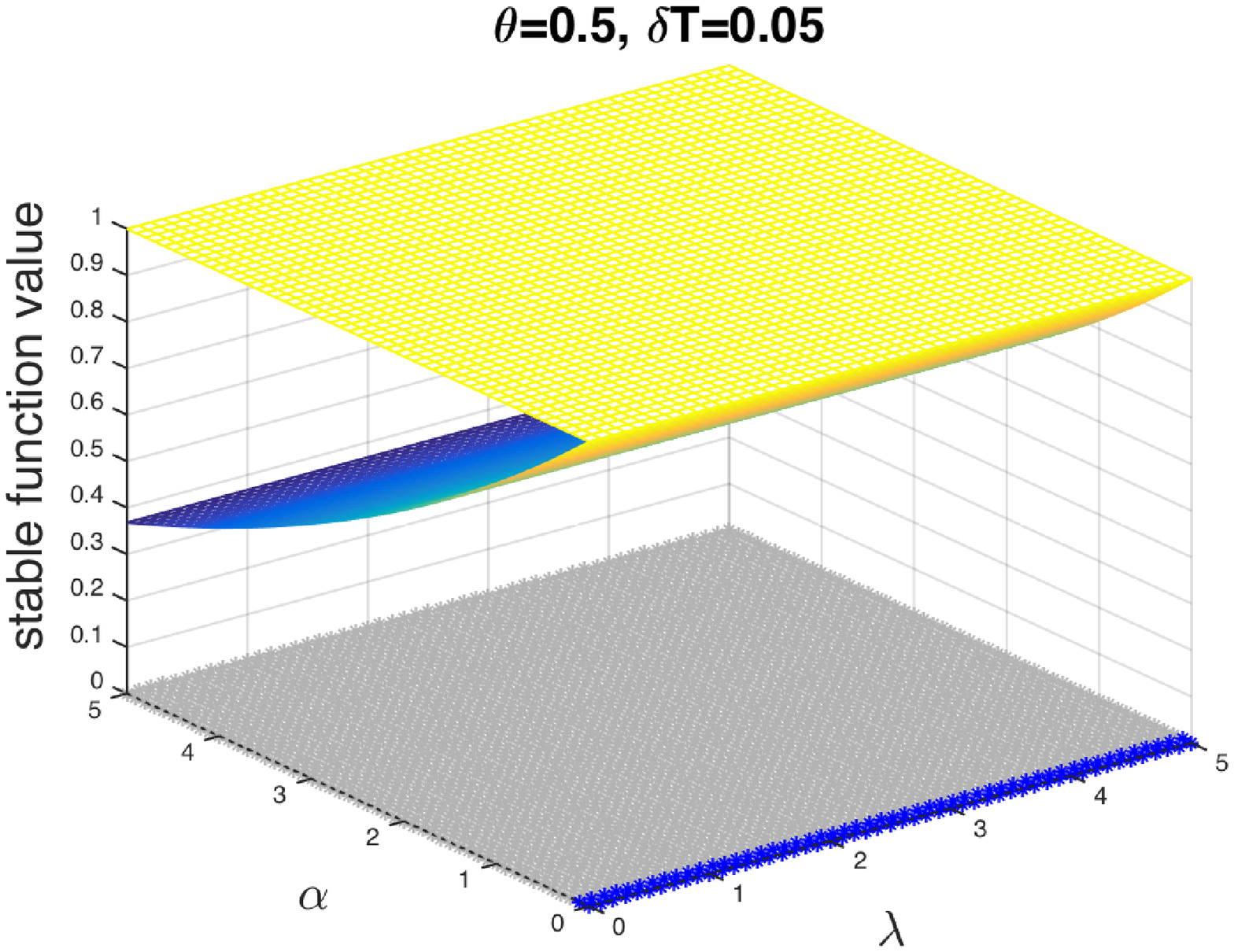}
  \end{minipage}
  }
  \caption{Convergence area (grey) vs. $\alpha$ and $\lambda$.}
\label{figstable}
\end{figure}

The distribution of $u_n^m$ converges to $\mu^m_\infty$ as $n\to\infty$ and $\delta T\to0$ for any $\alpha>0$ if and only if $|\eta|<1$, or equivalently, $\theta\in[\frac12,1]$, see Figure \ref{figstable}. The surface in each subfigures in Figure \ref{figstable} denotes the stable function for different $\theta=0,0.3,0.5$ and $\delta T=0.1,0.005$.
This condition also leads to the time-independent error analysis of the parareal algorithm, see Theorem \ref{tm3.2}.


The parareal algorithm \eqref{scheme} with $\G_\theta$ being the coarse propagator is expressed as
\begin{align}\label{linearP}
u_n^{(k)}=&(1+\bi(1-\theta)\lambda\delta T)S_\theta S(\delta T)u_{n-1}^{(k)}\nonumber\\
&-(1+\bi(1-\theta)\lambda\delta T)S_\theta S(\delta T)u_{n-1}^{(k-1)}
+\F(T_n,T_{n-1},u_{n-1}^{(k-1)})\nonumber\\
=&\eta S(\delta T)u_{n-1}^{(k)}-\eta S(\delta T)u_{n-1}^{(k-1)}
+\F(T_n,T_{n-1},u_{n-1}^{(k-1)}).
\end{align}

The following result gives the error caused by the parareal algorithms. When the coarse step size $\delta T$ is not extremely small, the convergence shows order $k$ with respect to $\delta T$ in a strong sense.
\begin{tm}\label{tm3.2}
Let Assumptions \ref{ap1} and \ref{ap2} hold with $s=0$, and $\{u_n^{(k)}\}_{0\le n\le N,k\in\N}$ be the solution of \eqref{linearP} with $\F$ being the exact propagator. Assume that $\lambda\delta T<1.$ Then for a fixed iteration step $k\in\N$, $u_n^{(k)}$ is an approximation of $u(T_n)$ with order $k$. More precisely, if $\alpha>\sqrt{\left(\frac12-\theta\right)^+}|\lambda|$, then
\begin{align*}
\sup_{n\in\N}\left\|u(T_n)-u_n^{(k)}\right\|_{L^2(\Omega;H)}\le C\left[(2\theta-1)\delta T^k+\delta T^{2k}\right]\sup_{n\in\N}\left\|u(T_n)-u_n^{(0)}\right\|_{L^2(\Omega;H)}
\end{align*}
with $C=C(k,\alpha,\theta,\lambda)$ independent of time interval. 
Here, $\left(\frac12-\theta\right)^+:=\left(\frac12-\theta\right)\vee0$. 

Otherwise,
\begin{align*}
\sup_{0\le n\le N}\left\|u(T_n)-u_n^{(k)}\right\|_{L^2(\Omega;H)}\le C\delta T^k\sup_{0\le n\le N}\left\|u(T_n)-u_n^{(0)}\right\|_{L^2(\Omega;H)}
\end{align*}
with $C=C(T_N,k)$ and $T_N=\delta TN$ for some fixed $N\in\N$.
\end{tm}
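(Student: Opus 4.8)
The plan is to turn the stochastic problem into a deterministic linear recursion for the error by exploiting that \eqref{linearmodel} is linear. First I would note that, with $\F$ the exact flow, both $\F$ and the coarse propagator $\G_\theta$ are affine in the initial datum and, on a given coarse subinterval $[T_{n-1},T_n]$, carry the \emph{same} Wiener increment $\delta_n W$; moreover the additive-noise part of each is independent of the starting value. Writing $A_\F:=e^{(-\Lambda+\bi\lambda)\delta T}$ and $A_\G:=\eta S(\delta T)$ for the deterministic amplification operators of $\F$ and $\G_\theta$, the two coarse evaluations in the correction step \eqref{scheme} then differ only through $A_\G$ and all stochastic forcing cancels. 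Using $u(T_n)=\F(T_n,T_{n-1},u(T_{n-1}))$, the error $e_n^{(k)}:=u(T_n)-u_n^{(k)}$ obeys
\begin{align}\label{errrec}
e_n^{(k)}=A_\G\,e_{n-1}^{(k)}+(A_\F-A_\G)\,e_{n-1}^{(k-1)},\qquad e_0^{(k)}=0,
\end{align}
so that the randomness enters only through the initialization error $e^{(0)}$. This is the crux of the linear case: the iteration transports the initial error by deterministic operators, and no extra temporal regularity of the noise is required.

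Next I would diagonalize \eqref{errrec} in the eigenbasis $\{e_m\}$. Since $A_\F$ and $A_\G$ act on the $m$-th mode as the scalars $a_m:=e^{(-\lambda_m+\bi\lambda)\delta T}$ and $g_m:=\eta\,e^{-\lambda_m\delta T}$, the components $e_n^{(k),m}:=\langle e_n^{(k)},e_m\rangle$ solve a scalar first-order recursion with solution
\begin{align*}
e_n^{(k),m}=(a_m-g_m)\sum_{j=1}^{n}g_m^{\,n-j}\,e_{j-1}^{(k-1),m}.
\end{align*}
The governing quantities are $m$-independent because $a_m-g_m=(e^{\bi\lambda\delta T}-\eta)e^{-\lambda_m\delta T}$ shares the factor $e^{-\lambda_m\delta T}$ with $g_m$; hence $|a_m-g_m|=e^{-\alpha\delta T}|e^{\bi\lambda\delta T}-\eta|$ and $|g_m|=|\eta|e^{-\alpha\delta T}=\sqrt{\tilde\eta}$. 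Passing to $L^2(\Omega;H)$ norms, the one delicate point is the interchange of $\sup_n$ with the sum over $m$; I would sidestep it by applying Cauchy--Schwarz to the convolution, which pulls the geometric weight $|g_m|^{n-j}$ out and lets the sum over $m$ be taken first. With $M^{(k)}:=\sup_n\|e_n^{(k)}\|_{L^2(\Omega;H)}$ this yields the contraction $M^{(k)}\le\rho\,M^{(k-1)}$, hence $M^{(k)}\le\rho^kM^{(0)}$, where $\rho=|a_m-g_m|/(1-|g_m|)$ whenever $\tilde\eta<1$.

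It then remains to estimate $\rho$. For the numerator, a Taylor expansion of the rational function $\eta$ against $e^{\bi\lambda\delta T}$, legitimate because $\lambda\delta T<1$, gives $e^{\bi\lambda\delta T}-\eta=(\theta-\frac12)\lambda^2\delta T^2+O(\delta T^3)$, so $|a_m-g_m|\le C[|2\theta-1|\delta T^2+\delta T^3]$; the vanishing of the leading term at $\theta=\frac12$ is exactly the source of the order doubling. For the denominator I would expand $\tilde\eta=1-2\alpha\delta T+(1-2\theta)\lambda^2\delta T^2+O(\delta T^3)$ and show that $\alpha>\sqrt{(\frac12-\theta)^+}|\lambda|$ forces $1-\tilde\eta\ge c\,\delta T$ uniformly in $\delta T$ --- dominating the possibly negative $(1-2\theta)\lambda^2\delta T^2$ contribution when $\theta<\frac12$ via $e^{2\alpha\delta T}\ge1+2\alpha\delta T+2\alpha^2\delta T^2$ --- whence $1-\sqrt{\tilde\eta}\ge c'\delta T$. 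Together these give $\rho\le C[|2\theta-1|\delta T+\delta T^2]$, and $(a+b)^k\le2^k(a^k+b^k)$ converts $\rho^k$ into the stated $C[(2\theta-1)\delta T^k+\delta T^{2k}]$ with $C$ independent of the horizon. In the complementary case, where $\tilde\eta<1$ may fail, I would work on the fixed interval $[0,T_N]$: now $\sum_{j=1}^ng_m^{\,n-j}$ is not summable to $(1-|g_m|)^{-1}$, but since $|g_m|\le1+C\delta T^2$ one has $|g_m|^N\le e^{CT_N}$ and the sum is $\le C(T_N)/\delta T$; multiplying by $|a_m-g_m|\le C\delta T^2$ restores a per-iteration factor $C(T_N)\delta T$ and the order-$k$ bound $C(T_N,k)\delta T^k$.

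I expect the principal obstacle to be making the second paragraph's estimate genuinely time-uniform --- correctly handling the $\sup_n$/$\sum_m$ interchange through the Cauchy--Schwarz step rather than a naive bound --- together with the quantitative stability inequality $1-\sqrt{\tilde\eta}\ge c\,\delta T$: recovering the \emph{sharp} threshold $\alpha>\sqrt{(\frac12-\theta)^+}|\lambda|$, as opposed to a $\delta T$-dependent one, requires expanding $\tilde\eta$ to second order and using the convexity lower bound on $e^{2\alpha\delta T}$ to absorb the sign-indefinite $(1-2\theta)\lambda^2\delta T^2$ term.
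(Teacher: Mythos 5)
Your proposal is correct and follows essentially the same route as the paper: the noise cancels to leave the deterministic two-term error recursion $\epsilon_n^{(k)}=\eta S(\delta T)\epsilon_{n-1}^{(k)}+(e^{\bi\lambda\delta T}-\eta)S(\delta T)\epsilon_{n-1}^{(k-1)}$, which is contracted via the geometric-sum bound $\rho=|e^{\bi\lambda\delta T}-\eta|e^{-\alpha\delta T}/(1-|\eta|e^{-\alpha\delta T})$ when $|\eta|e^{-\alpha\delta T}<1$ (the paper phrases this through the Gander--Vaidyanathan matrix $M(\beta)$) and via the $\beta\ge1$ branch on finite horizons otherwise, with the same Taylor expansions $e^{\bi\lambda\delta T}-\eta=(\theta-\tfrac12)\lambda^2\delta T^2+O(\delta T^3)$ and $e^{2\alpha\delta T}\ge1+2\alpha\delta T+2\alpha^2\delta T^2>|\eta|^2$ supplying the order and the threshold $\alpha>\sqrt{(\tfrac12-\theta)^+}|\lambda|$. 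Your Fourier-mode diagonalization and the Cauchy--Schwarz/sup-interchange worry are unnecessary detours: since $\eta$ is a scalar and $\|S(\delta T)\|_{\mathcal{L}(H)}\le e^{-\alpha\delta T}$, the paper takes $L^2(\Omega;H)$ norms directly in the recursion.
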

\begin{proof}
The parareal algorithm based on $\G_\theta$ with $\F$ denoting the exact propagator yields
\begin{align*}
u_{n}^{(k)}=&\eta S(\delta T)u_{n-1}^{(k)}-\eta S(\delta T)u_{n-1}^{(k-1)}+S(\delta T)u_{n-1}^{(k-1)}\\
&+\bi\lambda\int_{T_{n-1}}^{T_n}S(T_n-s)u_{u_{n-1}^{(k-1)}}(s)ds+\int_{T_{n-1}}^{T_n}S(T_n-s)Q^\frac12dW
\end{align*}
with $n\ge1,k\ge1$ and $u_{u_{n-1}^{(k-1)}}(s)$ denoting the exact solution at time $s$ starting from $u_{n-1}^{(k-1)}$ at time $T_{n-1}$. 

Denoting $\epsilon_n^{(k)}:=u_n^{(k)}-u(T_n)$, we obtain
\begin{align*}
\epsilon_n^{(k)}=&\eta S(\delta T)\epsilon_{n-1}^{(k)}-\eta S(\delta T)\epsilon_{n-1}^{(k-1)}+S(\delta T)\epsilon_{n-1}^{(k-1)}\\
&+\bi\lambda\int_{T_{n-1}}^{T_n}S(T_n-s)\left[u_{u_{n-1}^{(k-1)}}(s)-u_{u(T_{n-1})}(s)\right]ds\\
=&\eta S(\delta T)\epsilon_{n-1}^{(k)}+\left[e^{\bi\lambda\delta T}-\eta\right]S(\delta T)\epsilon_{n-1}^{(k-1)},
\end{align*}
where in the last step we have used the following fact
\begin{align*}
&u_{u_{n-1}^{(k-1)}}(s)-u_{u(T_{n-1})}(s)\\
=&e^{(\bi\Delta-\alpha+\bi\lambda)(s-T_{n-1})}u_{n-1}^{(k-1)}+\int_{T_{n-1}}^se^{(\bi\Delta-\alpha+\bi\lambda)(s-r)}Q^\frac12dW(r)\\
&-e^{(\bi\Delta-\alpha+\bi\lambda)(s-T_{n-1})}u(T_{n-1})-\int_{T_{n-1}}^se^{(\bi\Delta-\alpha+\bi\lambda)(s-r)}Q^\frac12dW(r)\\
=&S(s-T_{n-1})e^{\bi\lambda(s-T_{n-1})}\epsilon_{n-1}^{(k-1)}.
\end{align*}
Hence, we get
\begin{align}\label{epsilonk}
\|\epsilon_n^{(k)}\|_{L^2(\Omega;H)}\le&|\eta|e^{-\alpha\delta T}\|\epsilon_{n-1}^{(k)}\|_{L^2(\Omega;H)}+|e^{\bi\lambda\delta T}-\eta|e^{-\alpha\delta T}\|\epsilon_{n-1}^{(k-1)}\|_{L^2(\Omega;H)}\nonumber\\
\le&\left(|\eta|e^{-\alpha\delta T}\right)^n\|\epsilon_{0}^{(k)}\|_{L^2(\Omega;H)}\nonumber\\
&+|e^{\bi\lambda\delta T}-\eta|e^{-\alpha\delta T}\sum_{j=0}^{n-1}\left(|\eta|e^{-\alpha\delta T}\right)^{n-1-j}\|\epsilon_{j}^{(k-1)}\|_{L^2(\Omega;H)}\nonumber\\
=&|e^{\bi\lambda\delta T}-\eta|e^{-\alpha\delta T}\sum_{j=1}^{n-1}\left(|\eta|e^{-\alpha\delta T}\right)^{n-1-j}\|\epsilon_{j}^{(k-1)}\|_{L^2(\Omega;H)}
\end{align}
based on the fact $\epsilon_0^{(k)}=0$ for any $k\in\N.$
Denoting the error vector 
\begin{align*}
\varepsilon^{(k)}:=\left(\|\epsilon_1^{(k)}\|_{L^2(\Omega;H)},\cdots,\|\epsilon_n^{(k)}\|_{L^2(\Omega;H)}\right)^\top
\end{align*} 
and the $n$-dimensional matrix (see also \cite{gander2007})
\begin{equation*}M(\beta)=
\left(
\begin{array}{cccccc}
0&0&\cdots &0& 0\\
1&0&\cdots & 0&0\\
\beta&1&\cdots &0&0\\
\beta^{2}&\beta&\cdots&0&0\\
\vdots & \vdots& \vdots &\vdots&\vdots \\
\beta^{n-2}&\beta^{n-3}&\cdots&1&0
\end{array}
\right),
\end{equation*}
we can rewrite \eqref{epsilonk} as
\begin{align*}
\varepsilon^{(k)}\le |e^{\bi\lambda\delta T}-\eta|e^{-\alpha\delta T}M(|\eta|e^{-\alpha\delta T})\varepsilon^{(k-1)}
\le |e^{\bi\lambda\delta T}-\eta|^ke^{-\alpha\delta Tk}M^k(|\eta|e^{-\alpha\delta T})\varepsilon^{(0)}.
\end{align*}
It is shown in \cite{gander2007} that  
\begin{equation*}
\|M^k(\beta)\|_\infty\le
\left\{
\begin{aligned}
&\min\left\{\left(\frac{1-\beta^{n-1}}{1-\beta}\right)^k,\left(
\begin{array}{c}
n-1\\
k
\end{array}
\right)\right\}\quad \text{if}~\beta<1,\\
&\beta^{n-1-k}\left(
\begin{array}{c}
n-1\\
k
\end{array}
\right)\quad\quad\quad\quad\quad\quad\quad\quad \text{if}~\beta\ge1,
\end{aligned}
\right.
\end{equation*}
where
\begin{align*}
\left(
\begin{array}{c}
n-1\\
k
\end{array}
\right)
=\frac{(n-1)(n-2)\cdots(n-k)}{k!}
\le\frac{n^k}{k!}.
\end{align*}

If $\alpha>\sqrt{\left(\frac12-\theta\right)^+}|\lambda|$, we get
\begin{align*}
e^{2\alpha\delta T}>1+2\alpha^2\delta T^2>1+(1-2\theta)^+\lambda^2\delta T^2
>1+\frac{(1-2\theta)\lambda^2\delta T^2}{1+\theta^2\lambda^2\delta T^2}
=|\eta|^2,
\end{align*}
which then yields $|\eta|e^{-\alpha\delta T}<1$. 
 It is apparent that this condition holds for all $\alpha>0$ if $\theta\in[\frac12,1]$. We conclude under this condition that
\begin{align*}
\|\varepsilon^{(k)}\|_\infty\le\left(\frac{|e^{\bi\lambda\delta T}-\eta|e^{-\alpha\delta T}}{1-|\eta|e^{-\alpha\delta T}}\right)^k\|\varepsilon^{(0)}\|_\infty.
\end{align*}
The solution of \eqref{linearP} with $\F$ being the exact flow converges to the exact solution as $k\to\infty$ if
\begin{align*}
|e^{\bi\lambda\delta T}-\eta|e^{-\alpha\delta T}+|\eta|e^{-\alpha\delta T}<1.
\end{align*}
For some fixed $k\in\N$, we get through Taylor expansion that
\begin{align*}
|e^{\bi\lambda\delta T}-\eta|^ke^{-\alpha\delta Tk}
\le\left(\frac12(2\theta-1)\lambda^2\delta T^2+C\delta T^3\right)^ke^{-\alpha\delta Tk},
\end{align*}
and in addition
\begin{align*}
\|M^k(|\eta|e^{-\alpha\delta T})\|_\infty\le(1-|\eta|e^{-\alpha\delta T})^{-k}
\le\left(1-e^{\left(\sqrt{\left(\frac12-\theta\right)^+}|\lambda|-\alpha\right)\delta T}\right)^{-k}
\le(C\delta T^{-1})^k,
\end{align*}
where above constant $C=C\left(\alpha-\sqrt{\left(\frac12-\theta\right)^+}|\lambda|\right)$ decreases as $\alpha-\sqrt{\left(\frac12-\theta\right)^+}|\lambda|$ becomes larger. 
Eventually, we conclude
\begin{align*}
\|\varepsilon^{(k)}\|_\infty\le (C(2\theta-1)\delta T+C\delta T^2)^k\|\varepsilon^{(0)}\|_\infty.
\end{align*}

If $\theta\in[0,\frac12)$ and $\alpha\le\sqrt{\left(\frac12-\theta\right)}|\lambda|$, we revise above proof as
\begin{align*}
\|\varepsilon^{(k)}\|_\infty
\le&\left(|e^{\bi\lambda\delta T}-\eta|e^{-\alpha\delta T}\right)^k(|\eta|e^{-\alpha\delta T}\vee1)^{n-1-k}\frac{n^k}{k!}\|\varepsilon^{(0)}\|_\infty\\
\le&\left(C\delta T^2e^{-\alpha\delta T}\right)^ke^{\left(\sqrt{2(1-2\theta)}|\lambda|-\alpha\right) T_n}\frac{n^k}{k!}\|\varepsilon^{(0)}\|_\infty\\
\le&\frac{(CT_ne^{-\alpha\delta T})^k}{k!}e^{\left(\sqrt{2(1-2\theta)}|\lambda|-\alpha\right) T_n}\delta T^k\|\varepsilon^{(0)}\|_\infty,
\end{align*}
which converges as $k\to\infty$ and shows order $k$ only on finite time intervals.
\end{proof}
\begin{rk}\label{rk3}
Note that the Fourier components of the noise term 
\begin{align*}
\int_0^te^{-\lambda_m(t-s)}\sum_{i=1}^\infty\langle Q^\frac12e_i,e_m\rangle d\beta_i(s),\quad m\in\N
\end{align*}
are Gaussian processes and their increments can be simulated through random variables in the same distribution.
Hence, scheme \eqref{linearG} can also be replaced by
\begin{align*}
u_n=(1+\bi(1-\theta)\lambda\delta T)S_{\theta}S(\delta T)u_{n-1}+S_{\theta}\int_{T_{n-1}}^{T_n}S(T_n-s)Q^{\frac12}dW(s),
\end{align*}
and the accuracy of parareal algorithm \eqref{linearP} remains the same. 
\end{rk}
\begin{rk}\label{highregularity}
If instead, the implicit Euler scheme is considered as the coarse propagator $\G$, the parareal algorithm \eqref{scheme} with $\F$ being the exact propagator turns to be
\begin{align*}
u_n^{(k)}=\breve S_{\delta T} u_{n-1}^{(k)}-\breve S_{\delta T} u_{n-1}^{(k-1)}+\breve S(\delta T) u_{n-1}^{(k-1)}+\int_{T_{n-1}}^{T_n}\breve S(T_n-s)Q^{\frac12}dW(s),
\end{align*}
where
$\breve S_{\delta T}=\left(1+\alpha\delta T-\bi\lambda\delta T-\bi\delta T\Delta\right)^{-1}$ and $\breve S(\delta T)=e^{(\bi\Delta-\alpha+\bi\lambda)\delta T}$.

In this case, the error between $u_n^{(k)}$ and $u(T_n)$ shows
\begin{align*}
\epsilon_n^{(k)}=\breve S_{\delta T}\epsilon_{n-1}^{(k)}+\left(\breve S(\delta T)-\breve S_{\delta T}\right)\epsilon_{n-1}^{(k-1)}.
\end{align*}
To gain a convergence order, the estimations of $\|\breve S(\delta T)-\breve S_{\delta T}\|_{\mathcal{L}(\dot H^{s},H)}$ and $\|\epsilon_n^{(0)}\|_{\dot H^{ks}}$ will be needed. It then requires a extremely high regularity of both $u(t)$ and $u_n^{(0)}$, and that parameter $s$ in Assumption \ref{ap2} is large enough, while it is not proper to give such regularity assumptions.
\end{rk}

\section{Application to the nonlinear case}
\label{sec4}
For the nonlinear case \eqref{model}, 
parareal exponential $\theta$-scheme is also suitable for longtime simulation with some restriction on $\delta T$ and $\alpha$. We take the case $\theta=0$ as a keystone to show the convergence of the proposed parareal algorithm  and its fully discrete scheme with $\F$ being a numerical propagator.

Moreover, to ensure that less restriction on $\delta T$ is needed, some modification of the coarse propagator is required instead of using the exponential $\theta$-scheme. We give the convergence condition for the modified exponential $\theta$-scheme with general $\theta\in[0,1]$.

\subsection{Parareal exponential Euler scheme($\theta=0$)}
We define the coarse propagator based on the exponential Euler scheme
\begin{align}\label{nonlinearG}
u_{n+1}=S(\delta T)u_n+\bi S(\delta T)F(u_n)\delta T+S(\delta T)Q^{\frac12}\delta_{n+1}W
=:\G_{I}(T_{n+1},T_n,u_n)
\end{align}
with $\delta_{n+1}W:=W(T_{n+1})-W(T_n)$. The initial value of the numerical solution is the same as the initial value of the exact solution, and apparently $\{u_n\}_{n=1}^N$ is $\{\mathcal{B}_{T_n}\}_{n=1}^N$-adapted.

The following result gives the error caused by the parareal algorithms. When the coarse step size $\delta T$ is not extremely small, the convergence shows order $k$ with respect to $\delta T$ in a strong sense. Its proof is quite similar to that of Theorem \ref{tm3.2} and is given in the Appendix.
\begin{tm}\label{tm4.1}
Let Assumptions \ref{ap1} and \ref{ap2} hold with $s=0$, and $\{u_n^{(k)}\}_{0\le n\le N,k\in\N}$ be the solution of \eqref{scheme} with $\F$ being the exact propagator and $\G=\G_{I}$ being the propagator defined in \eqref{nonlinearG}. Then for $\alpha\ge0$ and any $1\le n\le N$, $u_n^{(k)}$ converges to $u(T_n)$ as $k\to\infty$. More precisely, 
\begin{align*}
&\sup_{0\le n\le N}\left\|u(T_n)-u_n^{(k)}\right\|_{L^2(\Omega;H)}\\
\le &\left(e^{-\alpha\delta T}\right)^k\frac{(CT)^k}{k!}\left(e^{( L_F-\alpha)T}\vee1\right)\sup_{0\le n\le N}\left\|u(T_n)-u_n^{(0)}\right\|_{L^2(\Omega;H)}
\end{align*}
for any $k\in\N$ with some positive constant $C$ depending only on $L_F$ and $\alpha$.

If $\alpha>0$, there exists some $\delta T_*=\delta T_*(\alpha)\in(0,1)$ satisfying $\delta T_*^{-1}\ln\delta T_*^{-1}=\alpha$ such that the error above shows order $k$ with respect to $\delta T$ when $\delta T\in[\delta T_*,1)$:
\begin{align*}
&\sup_{0\le n\le N}\left\|u(T_n)-u_n^{(k)}\right\|_{L^2(\Omega;H)}\\
\le& \left(\delta T\right)^k\frac{(CT)^k}{k!}\left(e^{(L_F-\alpha)T}\vee1\right)\sup_{0\le n\le N}\left\|u(T_n)-u_n^{(0)}\right\|_{L^2(\Omega;H)}.
\end{align*}
\end{tm}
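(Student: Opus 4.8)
The plan is to follow the proof of Theorem \ref{tm3.2} almost verbatim: derive a two-index recursion for the error $\epsilon_n^{(k)} := u_n^{(k)} - u(T_n)$, cast it in terms of the matrix $M(\beta)$, and invoke the bound on $\|M^k(\beta)\|_\infty$ recorded from \cite{gander2007}. The only genuinely new ingredient is the nonlinearity $F$, which I would control via the Lipschitz bound in Assumption \ref{ap1} together with a Gronwall argument.

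First I would write the exact flow as $u(T_n)=\F(T_n,T_{n-1},u(T_{n-1}))$ and subtract it from the parareal update \eqref{scheme} with $\G=\G_I$ given in \eqref{nonlinearG}. Because the additive-noise contribution and the $S(\delta T)u_{n-1}^{(k-1)}$ term cancel between the two $\G_I$ evaluations and the $\F$ evaluation (the two exact flows use the same Wiener path), I expect to obtain
\begin{align*}
\epsilon_n^{(k)} = {}& S(\delta T)\epsilon_{n-1}^{(k)} + \bi S(\delta T)\big(F(u_{n-1}^{(k)}) - F(u_{n-1}^{(k-1)})\big)\delta T \\
&{} + \bi\int_{T_{n-1}}^{T_n} S(T_n - s)\big(F(u_{u_{n-1}^{(k-1)}}(s)) - F(u_{u(T_{n-1})}(s))\big)\,ds,
\end{align*}
where $u_v(s)$ denotes the exact solution starting from $v$ at $T_{n-1}$. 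Taking $L^2(\Omega;H)$ norms and using $\|S(t)\|_{\mathcal{L}(H)}\le e^{-\alpha t}$ with the Lipschitz bound, the first two terms give $e^{-\alpha\delta T}(1+L_F\delta T)\|\epsilon_{n-1}^{(k)}\|_{L^2(\Omega;H)}$ plus $e^{-\alpha\delta T}L_F\delta T\|\epsilon_{n-1}^{(k-1)}\|_{L^2(\Omega;H)}$. For the integral term, a Gronwall estimate on the difference of exact flows yields $\|u_{u_{n-1}^{(k-1)}}(s) - u_{u(T_{n-1})}(s)\|_{L^2(\Omega;H)} \le e^{(L_F-\alpha)(s-T_{n-1})}\|\epsilon_{n-1}^{(k-1)}\|_{L^2(\Omega;H)}$, which after multiplication by $e^{-\alpha(T_n-s)}$ and integration collapses to a contribution bounded by $e^{-\alpha\delta T}(e^{L_F\delta T}-1)\|\epsilon_{n-1}^{(k-1)}\|_{L^2(\Omega;H)}$. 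Collecting terms produces $\|\epsilon_n^{(k)}\|_{L^2(\Omega;H)} \le a\|\epsilon_{n-1}^{(k)}\|_{L^2(\Omega;H)} + b\|\epsilon_{n-1}^{(k-1)}\|_{L^2(\Omega;H)}$ with $a = e^{-\alpha\delta T}(1+L_F\delta T)$ and $b = e^{-\alpha\delta T}(L_F\delta T + e^{L_F\delta T}-1)$.

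Since $\epsilon_0^{(k)}=0$ for all $k$, this recursion has exactly the structure governed by $M(a)$, so $\varepsilon^{(k)} \le b\,M(a)\,\varepsilon^{(k-1)} \le b^k M^k(a)\,\varepsilon^{(0)}$, and I would apply the two-case bound on $\|M^k(a)\|_\infty$. Using $e^{L_F\delta T}-1 \le L_F\delta T\,e^{L_F}$ on $\delta T\in(0,1)$ gives $b \le C e^{-\alpha\delta T}\delta T$ with $C=C(L_F)$, hence $b^k \le C^k(e^{-\alpha\delta T})^k\delta T^k$. When $a<1$ one has $\|M^k(a)\|_\infty \le \binom{n-1}{k}\le (T/\delta T)^k/k!$, and the powers of $\delta T$ cancel to leave $(CT)^k(e^{-\alpha\delta T})^k/k!$. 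When $a\ge 1$, the extra factor satisfies $a^{n-1-k}\le a^{n-1}\le e^{(L_F-\alpha)T}$ (since $a\le e^{(L_F-\alpha)\delta T}$ and $T_{n-1}\le T$), and here necessarily $L_F\ge\alpha$ so $e^{(L_F-\alpha)T}\ge 1$; in the other case the factor is $1$. Both cases therefore unify into the factor $e^{(L_F-\alpha)T}\vee 1$, giving the first estimate. For the second estimate I would observe that $b\le C\delta T^2$ precisely when $e^{-\alpha\delta T}\le \delta T$, i.e. $\alpha\ge\delta T^{-1}\ln\delta T^{-1}$; the map $h(\delta T):=\delta T^{-1}\ln\delta T^{-1}$ is strictly decreasing from $+\infty$ to $0$ on $(0,1)$ (as $h'(\delta T)=\delta T^{-2}(\ln\delta T-1)<0$), so $h(\delta T_*)=\alpha$ has a unique root $\delta T_*\in(0,1)$ and $\delta T\ge\delta T_*$ is equivalent to $e^{-\alpha\delta T}\le\delta T$. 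Replacing $(e^{-\alpha\delta T})^k$ by $\delta T^k$ in the first estimate over this range then yields the claimed order-$k$ bound.

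The main obstacle I anticipate is the bookkeeping of constants in the Gronwall (integral) term and in the $a\ge1$ branch: one must verify that the exact-flow difference really grows like $e^{(L_F-\alpha)(s-T_{n-1})}$ and that, after weighting by $e^{-\alpha(T_n-s)}$ and integrating, the telescoping across the $n$ subintervals collapses to the clean global factor $e^{(L_F-\alpha)T}\vee 1$ rather than an uncontrolled $n$-dependent product. Everything else is a routine adaptation of the linear argument in Theorem \ref{tm3.2}.
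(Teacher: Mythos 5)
Your proposal is correct and follows essentially the same route as the paper's own proof in the Appendix: the same error decomposition into the linear semigroup term, the $F(u^{(k)})-F(u^{(k-1)})$ term, and the integral term controlled by Gronwall on the difference of exact flows, followed by the same $M(\beta)$ matrix argument and the same characterization of $\delta T_*$ via $e^{-\alpha\delta T}\le\delta T$. The only cosmetic difference is that your Gronwall bound $e^{(L_F-\alpha)(s-T_{n-1})}$ is slightly sharper than the paper's $\left(1+L_F(s-T_{n-1})e^{L_F(s-T_{n-1})}\right)e^{-\alpha(s-T_{n-1})}$, which affects only the unspecified constant $C$.
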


To obtain an implementable numerical method, the fine propagator $\F$ need to be chosen as a proper numerical method instead of the exact propagator.
In this case, it is called a fully discrete scheme, which does not mean the discretization in both space and time direction as it usually does. 
We refer to \cite{wang2017} for the discretization in space of stochastic cubic nonlinear Schr\"odinger equation, which is also available for the model considered in the present paper.

In particular, we choose $\F$ as a propagator obtained by applying the exponential integrator repeatedly on the fine grid with step size $\delta t$:
\begin{align*}
\F_{I}(t_{n,j},t_{n,j-1},v):=S(\delta t)v+\bi S(\delta t)F(v)\delta t+S(\delta t)Q^{\frac12}\delta_{n,j}W,\quad \forall~v\in H
\end{align*}
with $\delta_{n,j}W:=W(t_{n,j})-W(t_{n,j-1})$. 
Hence, we get the following fully discrete scheme:
\begin{equation}\label{full}
\begin{aligned}
&u_{n+1}^{(0)}=\G_{I}(T_{n+1},T_n,u_n^{(0)}),\quad u_0^{(0)}=u_0,\quad n=0,\cdots,N-1,\\
&\hat u_{n,j}^{(k-1)}=\F_{I}(t_{n,j},t_{n,j-1},\hat u_{n,j-1}^{(k-1)}),\quad \hat u_{n,0}^{(k-1)}=u_n^{(k-1)},\quad j=1,\cdots, J,\quad k\in\N\backslash\{0\},\\
&u_{n+1}^{(k)}=\G_{I}(T_{n+1},T_n,u_n^{(k)})+\hat u_{n,J}^{(k-1)}-\G_{I}(T_{n+1},T_n,u_n^{(k-1)}),\quad k\in\N\backslash\{0\},
\end{aligned}
\end{equation}
where the notation $t_{n,j}$ has been defined in Section \ref{sec2}.

The approximate error of the fully discrete scheme \eqref{full} comes from two parts: the parareal technique based on a coarse propagator and the approximate error of the fine propagator.
In fact, the second part is exactly the approximate error of a specific serial scheme without iteration and depends heavily on the regularity of the noise given in Assumption \ref{ap2}, which will not be dealt with here. The readers are referred to \cite{wang2017,DD04,DD06} and references therein for the study on accuracy of serial schemes. 
We now focus on the error caused by the former part
and aim to show that the solution of \eqref{full} converges to the solution of the fine propagator $\F$ as $k$ goes to infinity. 
To this end, we denote by
\begin{align*}
v_{n,j}=\F_{I}(t_{n,j},t_{n,j-1},v_{n,j-1}),\quad n=0,\cdots,N,~ j=1,\cdots,J
\end{align*}
the solution of $\F$ on fine gird $\{t_{n,j}\}_{n\in\{0,\cdots,N\},j\in\{0,\cdots,J\}}$ starting from $v_{0,0}=u_0$, where $t_{n+1,0}=T_{n+1}=t_{n,J}$ and $v_{n+1,0}:=v_{n,J}$.
\begin{tm}\label{tm4.2}
Let Assumptions \ref{ap1} and \ref{ap2} hold with $s=0$ and $\{u_n^{(k)}\}_{0\le n\le N,k\in\N}$ be the solution of \eqref{full}. Then for any $k\in\N$, it holds
\begin{align*}
&\sup_{0\le n\le N}\left\|u^{(k)}_n-v_{n,0}\right\|_{L^2(\Omega;H)}\\
\le& \left(e^{-\alpha\delta T}\right)^k\frac{(CT)^k}{k!}\left(e^{(L_F-\alpha)T}\vee1\right)\sup_{0\le n\le N}\left\|u_n^{(0)}-v_{n,0}\right\|_{L^2(\Omega;H)}.
\end{align*}

In addition, if $\delta T\in[\delta T_*,1)$ with $\delta T_*$ being defined as in Theorem \ref{tm4.1}, the error shows order $k$ with respect to $\delta T$ similar to that in Theorem \ref{tm4.1}.
\end{tm}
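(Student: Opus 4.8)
The plan is to follow the argument behind Theorem \ref{tm4.1} almost verbatim, the sole change being that the local defect now compares the two numerical propagators $\F_I$ and $\G_I$ rather than the exact flow and $\G_I$. First I would set $e_n^{(k)}:=u_n^{(k)}-v_{n,0}$ and derive a recursion for it. Since $v_{n+1,0}=\F_I(T_{n+1},T_n,v_{n,0})$ is nothing but $J$ fine steps started from $v_{n,0}$, subtracting this from the last line of \eqref{full} and inserting $\pm\,\G_I(T_{n+1},T_n,v_{n,0})$ gives
\begin{align*}
e_{n+1}^{(k)}=\left[\G_I(T_{n+1},T_n,u_n^{(k)})-\G_I(T_{n+1},T_n,v_{n,0})\right]+\left[D(u_n^{(k-1)})-D(v_{n,0})\right],
\end{align*}
where $D(\cdot):=\F_I(T_{n+1},T_n,\cdot)-\G_I(T_{n+1},T_n,\cdot)$. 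The key structural observation is that the additive noise increments entering $\F_I$ and $\G_I$ are identical in the two arguments of each bracket, so they cancel pathwise; every resulting term is a deterministic Lipschitz variation of the initial datum, to which I then apply $\|\cdot\|_{L^2(\Omega;H)}$. Together with $e_0^{(k)}=0$ this produces a recursion of the same shape as in the proof of Theorem \ref{tm3.2}.

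Two Lipschitz estimates drive the proof. The first is the mean-square Lipschitz constant of the coarse step: from $\G_I(a)-\G_I(b)=S(\delta T)(a-b)+\bi\delta T\,S(\delta T)(F(a)-F(b))$, the bound $\|S(\delta T)\|_{\mathcal{L}(H)}\le e^{-\alpha\delta T}$ and Assumption \ref{ap1} give the factor $\gamma:=e^{-\alpha\delta T}(1+L_F\delta T)\le e^{(L_F-\alpha)\delta T}$. The second, which is the heart of the matter, is a defect estimate $\|D(a)-D(b)\|_{L^2(\Omega;H)}\le\rho\,\|a-b\|_{L^2(\Omega;H)}$ with $\rho=Ce^{-\alpha\delta T}\delta T$. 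To obtain it I would unroll the $J$-fold fine composition: writing $a=v_0,\dots,v_J$ and $b=w_0,\dots,w_J$ for the two fine trajectories driven by the same noise, telescoping the one-step recursion (the noise cancels) yields $v_J-w_J=S(\delta T)(a-b)+\bi\delta t\sum_{j=1}^J S((J-j+1)\delta t)(F(v_{j-1})-F(w_{j-1}))$. Subtracting the one-term expansion of $\G_I$, the common linear part $S(\delta T)(a-b)$ cancels and only the two nonlinear increments survive; the coarse one is bounded at once by $e^{-\alpha\delta T}L_F\delta T\|a-b\|$.

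The delicate point, which I expect to be the main obstacle, is extracting the full factor $e^{-\alpha\delta T}$ — and not merely $\delta T$ — from the fine nonlinear sum, whose summands near $j=J$ carry only $e^{-\alpha\delta t}$. The plan is to combine the one-step growth bound $\|v_{j-1}-w_{j-1}\|\le(1+L_F\delta t)^{j-1}\|a-b\|\le e^{L_F\delta T}\|a-b\|$ with the splitting $\|S((J-j+1)\delta t)\|_{\mathcal{L}(H)}\le e^{-\alpha\delta T}e^{\alpha(j-1)\delta t}$, so that the fine increment is dominated by $e^{-\alpha\delta T}L_Fe^{L_F\delta T}\bigl(\delta t\sum_{j=1}^J e^{\alpha(j-1)\delta t}\bigr)\|a-b\|$. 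Summing the geometric series and using $\delta t\sum_{j=1}^J e^{\alpha(j-1)\delta t}\le\frac{e^{\alpha\delta T}-1}{\alpha}$, the fine increment is at most $L_Fe^{L_F\delta T}\frac{1-e^{-\alpha\delta T}}{\alpha}\|a-b\|$. Finally, since $x\mapsto(e^x-1)/x$ is increasing and $\delta T<1$, one has $\frac{1-e^{-\alpha\delta T}}{\alpha}=e^{-\alpha\delta T}\frac{e^{\alpha\delta T}-1}{\alpha}\le\frac{e^{\alpha}-1}{\alpha}\,e^{-\alpha\delta T}\delta T$, which together with the coarse term yields $\rho=C(L_F,\alpha)e^{-\alpha\delta T}\delta T$ (the limiting value as $\alpha\to0$ being $C\,\delta T$, consistent with the case $\alpha=0$).

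It then remains to run the linear-algebra machinery of Theorem \ref{tm3.2}. The recursion $\|e_{n+1}^{(k)}\|_{L^2(\Omega;H)}\le\gamma\|e_n^{(k)}\|_{L^2(\Omega;H)}+\rho\|e_n^{(k-1)}\|_{L^2(\Omega;H)}$ together with $e_0^{(k)}=0$ gives, for the error vector $\varepsilon^{(k)}$ defined as in Theorem \ref{tm3.2}, the inequality $\varepsilon^{(k)}\le\rho\,M(\gamma)\varepsilon^{(k-1)}\le\rho^k M^k(\gamma)\varepsilon^{(0)}$, with $M$ the nilpotent matrix appearing there. Inserting Gander's estimate for $\|M^k(\gamma)\|_\infty$ (see \cite{gander2007}) and using $\binom{n-1}{k}\le n^k/k!\le(T/\delta T)^k/k!$, one checks in both cases $\gamma<1$ and $\gamma\ge1$ that $\|M^k(\gamma)\|_\infty\le(e^{(L_F-\alpha)T}\vee1)(T/\delta T)^k/k!$; indeed when $\gamma\ge1$ one has $\gamma^{n-1-k}\le e^{(L_F-\alpha)\delta T(n-1-k)}\le e^{(L_F-\alpha)T}$, while when $\gamma<1$ the binomial bound alone suffices. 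Substituting $\rho=Ce^{-\alpha\delta T}\delta T$, so that $\rho^k(T/\delta T)^k=(e^{-\alpha\delta T})^k(CT)^k$, collapses everything to the claimed estimate. The order-$k$ addendum is then immediate: for $\delta T\in[\delta T_*,1)$ one has $e^{-\alpha\delta T}\le\delta T$, exactly as in Theorem \ref{tm4.1}, whence $(e^{-\alpha\delta T})^k\le\delta T^k$.
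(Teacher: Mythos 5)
Your proposal is correct and follows essentially the same route as the paper: the same decomposition into a coarse Lipschitz part plus the difference of defects $\F_I-\G_I$, the same pathwise cancellation of the noise and of the common linear part $S(\delta T)(a-b)$, a Gronwall-type bound on the fine trajectory differences, and the same matrix iteration from Theorem \ref{tm3.2}/\ref{tm4.1}. The only (harmless) deviation is technical: you control the fine sub-steps by the product bound $(1+L_F\delta t)^{j-1}$ and extract the factor $e^{-\alpha\delta T}\delta T$ via the splitting $e^{-\alpha(J-j+1)\delta t}=e^{-\alpha\delta T}e^{\alpha(j-1)\delta t}$ and a geometric series, whereas the paper runs a discrete Gronwall inequality in squared norms for $\tilde G_j$ — both yield $\rho=C e^{-\alpha\delta T}\delta T$ and hence the stated estimate.
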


The proof of this theorem follows the same procedure as that of Theorem \ref{tm4.1} and is given in the Appendix for the readers' convenience.

\subsection{Parareal exponential $\theta$-scheme over longtime}
We now consider the exponential $\theta$-scheme in the nonlinear case
\begin{align*}
u_n=&S(\delta T)u_{n-1}+\bi(1-\theta)\delta TS(\delta T)F(u_{n-1})
+\bi\theta\delta TF(u_n)+S(\delta T)Q^\frac12\delta_nW.
\end{align*} 
The existence and uniqueness of the numerical solution is obtained under Assumptions \ref{ap1} and \ref{ap2} through the same procedure as those in \cite{wang2017,DD04}.
So we denote the unique solution of above scheme by $u_n=\tilde\G_\theta(T_n,T_{n-1},u_{n-1})$.

The parareal algorithm based on $\tilde\G_{\theta}$ with $\F$ denoting the exact propagator can be expressed as
\begin{align}\label{solu3}
u_n^{(k)}=&\tilde\G_{\theta}(T_n,T_{n-1},u_{n-1}^{(k)})+\F(T_n,T_{n-1},u_{n-1}^{(k-1)})-\tilde\G_{\theta}(T_n,T_{n-1},u_{n-1}^{(k-1)})\nonumber\\
=&:a_k+b_{k-1}-a_{k-1},
\end{align}
where
\begin{align*}
a_{k}=& S(\delta T)u_{n-1}^{(k)}+\bi(1-\theta)\delta T S(\delta T)F(u_{n-1}^{(k)})
+\bi\theta\delta T F(a_k)+S(\delta T)Q^\frac12\delta_nW,\\
b_{k-1}=&S(\delta T)u_{n-1}^{(k-1)}+\bi\int_{T_{n-1}}^{T_n}S(T_n-s)F\left(u_{u_{n-1}^{(k-1)}}(s)\right)ds+\int_{T_{n-1}}^{T_n}S(T_n-s)Q^\frac12dW.
\end{align*}

Based on the Taylor expansion of $F(a_k)=F(a_{k-1})+F'(\tau_k)(a_k-a_{k-1})$ with $\tau_k$ being determined by $a_k$ and $a_{k-1}$, we derive
\begin{align*}
a_k-a_{k-1}=&S(\delta T)\left(u_{n-1}^{(k)}-u_{n-1}^{(k-1)}\right)+\bi(1-\theta)\delta T S(\delta T)\left(F(u_{n-1}^{(k)})-F(u_{n-1}^{(k-1)})\right)\\
&+\bi\theta\delta T \left(F(a_k)-F(a_{k-1})\right)\\
=&S(\delta T)\left(u_{n-1}^{(k)}-u_{n-1}^{(k-1)}\right)+\bi(1-\theta)\delta T S(\delta T)\left(F(u_{n-1}^{(k)})-F(u_{n-1}^{(k-1)})\right)\\
&+\bi\theta\delta T F'(\tau_k)(a_k-a_{k-1})
\end{align*}

Hence, scheme \eqref{solu3} can be expressed as
\begin{align*}
u_n^{(k)}=&S_{\theta,k}S(\delta T)u_{n-1}^{(k)}+\left(1-S_{\theta,k}\right)S(\delta T)u_{n-1}^{(k-1)}\\
&+\bi(1-\theta)\delta TS_{\theta,k} S(\delta T)\left(F(u_{n-1}^{(k)})-F(u_{n-1}^{(k-1)})\right)\\
&+\bi\int_{T_{n-1}}^{T_n}S(T_n-s)F\left(u_{u_{n-1}^{(k-1)}}(s)\right)ds+\int_{T_{n-1}}^{T_n}S(T_n-s)Q^\frac12dW,
\end{align*}
where $S_{\theta,k}:=(1-\bi\theta\delta T F'(\tau_k))^{-1}$.

\begin{tm}
Let Assumptions \ref{ap1} and \ref{ap2} hold with $s=0$, and $\{u_n^{(k)}\}_{0\le n\le N,k\in\N}$ be the solution of \eqref{solu3}. Then the proposed algorithm \eqref{solu3} converges to the exact solution as $k\to\infty$ over unbounded time domain if 
\begin{align*}
f(\theta):=\left(1+(2-\theta)L_F\delta T+L_F\delta Te^{L_F\delta T}\right)e^{-\alpha\delta T}<1.
\end{align*} 

Moreover, the accuracy of the convergence is faster than $[f(\theta)]^k$, which decreases as $\theta$ being larger.
\end{tm}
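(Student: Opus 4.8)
The plan is to imitate the error analysis of Theorem \ref{tm3.2}, replacing the scalar factor $\eta$ by the solution operator of the implicit $\theta$-stage and the single Lipschitz factor $|e^{\bi\lambda\delta T}-\eta|$ by a \emph{local defect} between $\F$ and $\tilde\G_\theta$. Writing $\epsilon_n^{(k)}:=u_n^{(k)}-u(T_n)$ and using $u(T_n)=\F(T_n,T_{n-1},u(T_{n-1}))$, I would first insert $\pm\tilde\G_\theta(T_n,T_{n-1},u(T_{n-1}))$ into \eqref{solu3} to obtain
\begin{align*}
\epsilon_n^{(k)}=&\underbrace{\tilde\G_\theta(T_n,T_{n-1},u_{n-1}^{(k)})-\tilde\G_\theta(T_n,T_{n-1},u(T_{n-1}))}_{(\mathrm A)}\\
&+\underbrace{(\F-\tilde\G_\theta)(T_n,T_{n-1},u_{n-1}^{(k-1)})-(\F-\tilde\G_\theta)(T_n,T_{n-1},u(T_{n-1}))}_{(\mathrm B)}.
\end{align*}
Term $(\mathrm A)$ carries the same iteration index $k$ and produces the dominant factor close to one; term $(\mathrm B)$ is a difference of defects evaluated at $u_{n-1}^{(k-1)}$ and $u(T_{n-1})$, and is the mechanism that makes the iteration contract. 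A decisive simplification is that the noise in \eqref{model} is additive, so the stochastic integrals in $\F$ and in $\tilde\G_\theta$ do not depend on the initial datum and cancel identically in both $(\mathrm A)$ and $(\mathrm B)$; only drift and semigroup contributions survive, and all estimates below are pathwise before taking $L^2(\Omega;H)$-norms.

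For $(\mathrm A)$ I would subtract the defining stage relations and use $F(a)-F(a')=\overline{F'}(a-a')$ with the averaged derivative $\overline{F'}:=\int_0^1F'(a'+t(a-a'))\,dt$, giving $(\mathrm A)=S_\theta\big(S(\delta T)\epsilon_{n-1}^{(k)}+\bi(1-\theta)\delta T\,S(\delta T)(F(u_{n-1}^{(k)})-F(u(T_{n-1})))\big)$ with $S_\theta=(1-\bi\theta\delta T\,\overline{F'})^{-1}$. The key step, which is the nonlinear analogue of $|S_\theta|\le1$ in Theorem \ref{tm3.2} and the precise point where Assumption \ref{ap1} must enter, is the non-expansiveness $\|S_\theta\|_{\mathcal{L}(H)}\le1$: testing the implicit stage difference against its own conjugate, the gauge structure $\Im\langle\overline v,F(v)\rangle=0$ is meant to prevent the implicit nonlinear term from enlarging the norm. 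Combined with $\|S(\delta T)\|_{\mathcal{L}(H)}\le e^{-\alpha\delta T}$ and the Lipschitz bound on $F$, this yields $\|(\mathrm A)\|_{L^2(\Omega;H)}\le\beta\,\|\epsilon_{n-1}^{(k)}\|_{L^2(\Omega;H)}$ with $\beta:=(1+(1-\theta)L_F\delta T)e^{-\alpha\delta T}$.

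For $(\mathrm B)$, after the noise cancels, the defect difference is a sum of three pieces: (i) $\bi\int_{T_{n-1}}^{T_n}S(T_n-s)[F(u_{u_{n-1}^{(k-1)}}(s))-F(u_{u(T_{n-1})}(s))]\,ds$, (ii) the explicit term $\bi(1-\theta)\delta T\,S(\delta T)(F(u_{n-1}^{(k-1)})-F(u(T_{n-1})))$, and (iii) the implicit term $\bi\theta\delta T(F(\tilde\G_\theta(u_{n-1}^{(k-1)}))-F(\tilde\G_\theta(u(T_{n-1}))))$. For (i) I would invoke the Gronwall estimate $\|u_{u_{n-1}^{(k-1)}}(s)-u_{u(T_{n-1})}(s)\|\le e^{(L_F-\alpha)(s-T_{n-1})}\|\epsilon_{n-1}^{(k-1)}\|$ (the nonlinear counterpart of the identity used in Theorem \ref{tm3.2}), which after integration contributes $e^{-\alpha\delta T}(e^{L_F\delta T}-1)$; pieces (ii) and (iii) contribute $(1-\theta)L_F\delta T\,e^{-\alpha\delta T}$ and, via $\|S_\theta\|\le1$ once more, $\theta L_F\delta T(1+(1-\theta)L_F\delta T)e^{-\alpha\delta T}$. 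Collecting them gives $\|(\mathrm B)\|_{L^2(\Omega;H)}\le L_\Phi\|\epsilon_{n-1}^{(k-1)}\|_{L^2(\Omega;H)}$ with $L_\Phi=e^{-\alpha\delta T}[(e^{L_F\delta T}-1)+L_F\delta T+\theta(1-\theta)L_F^2\delta T^2]$; the elementary inequality $e^{x}-1+\theta(1-\theta)x^2\le xe^{x}$ for $x=L_F\delta T\ge0$ (since $xe^x-(e^x-1)=\int_0^x te^t\,dt\ge x^2/2\ge\theta(1-\theta)x^2$) then packages $\beta+L_\Phi\le f(\theta)$.

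Finally I would assemble $\|\epsilon_n^{(k)}\|_{L^2(\Omega;H)}\le\beta\|\epsilon_{n-1}^{(k)}\|_{L^2(\Omega;H)}+L_\Phi\|\epsilon_{n-1}^{(k-1)}\|_{L^2(\Omega;H)}$, take the supremum over $n$ (legitimate since $\epsilon_0^{(k)}=0$ for every $k$), and solve the scalar inequality $E^{(k)}\le\beta E^{(k)}+L_\Phi E^{(k-1)}$ for $E^{(k)}:=\sup_{0\le n\le N}\|\epsilon_n^{(k)}\|_{L^2(\Omega;H)}$. Because $f(\theta)<1$ forces $\beta<1$, this gives $E^{(k)}\le\frac{L_\Phi}{1-\beta}E^{(k-1)}$, and since $\frac{L_\Phi}{1-\beta}\le\beta+L_\Phi\le f(\theta)$ whenever $f(\theta)<1$, iterating produces $E^{(k)}\le f(\theta)^k E^{(0)}\to0$, with the true rate $\frac{L_\Phi}{1-\beta}$ strictly faster than $f(\theta)^k$; monotonicity is immediate from $\partial_\theta f<0$. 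I expect the main obstacle to be the contraction estimate $\|S_\theta\|_{\mathcal{L}(H)}\le1$ for the averaged implicit operator: unlike the scalar resolvent $(1-\bi\theta\lambda\delta T)^{-1}$ of the linear case, $\overline{F'}$ is only $\R$-linear, and one must exploit the gauge structure of Assumption \ref{ap1} to avoid the spurious factor $(1-\theta L_F\delta T)^{-1}$ that a crude resolvent bound would introduce and that would spoil the clean coefficient $(2-\theta)$ in $f(\theta)$. It is exactly this step that rewards larger $\theta$.
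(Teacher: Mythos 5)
Your proof is correct and follows essentially the same route as the paper's: the same splitting of $\epsilon_n^{(k)}$ into a contractive $\tilde\G_\theta$-difference at iteration level $k$ and a defect difference at level $k-1$, the same resolvent $(1-\bi\theta\delta T\,F')^{-1}$ for the implicit stage (you use the averaged derivative where the paper uses a pointwise mean value $F'(\tau_k)$, a minor technical improvement), the same Gronwall bound on $u_{u_{n-1}^{(k-1)}}(s)-u_{u(T_{n-1})}(s)$, and the same two-term recursion yielding the rate $\gamma_2/(1-\gamma_1)\le f(\theta)$. The step you single out as the main obstacle, $\|S_\theta\|_{\mathcal{L}(H)}\le 1$, is precisely the point the paper also relies on but merely asserts (``due to the fact $\|S_{\theta,k}\|_{\mathcal{L}(H)}<1$''), so your argument is no less complete than the published one there.
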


\begin{proof}
Based on the notation $\epsilon_n^{(k)}:=u_n^{(k)}-u(T_n)$ again, we derive
\begin{align*}
\epsilon_n^{(k)}=&S_{\theta,k}S(\delta T)\epsilon_{n-1}^{(k)}+\left(1-S_{\theta,k}\right)S(\delta T)\epsilon_{n-1}^{(k-1)}\\
&+\bi(1-\theta)\delta TS_{\theta,k} S(\delta T)\left(F(u_{n-1}^{(k)})-F(u_{n-1}^{(k-1)})\right)\\
&+\bi\int_{T_{n-1}}^{T_n}S(T_n-s)\left[F\left(u_{u_{n-1}^{(k-1)}}(s)\right)-F\left(u_{u(T_{n-1})}(s)\right)\right]ds.
\end{align*}
It then leads to 
\begin{align*}
&\|\epsilon_n^{(k)}\|_{L^2(\Omega;H)}\\
\le&\left(1+(1-\theta)L_F\delta T\right)\|S_{\theta,k}\|_{\mathcal{L}(H)}e^{-\alpha\delta T}\|\epsilon_{n-1}^{(k)}\|_{L^2(\Omega;H)}\\
&+\left(\|1-S_{\theta,k}\|_{\mathcal{L}(H)}+(1-\theta) L_F\delta T\|S_{\theta,k}\|_{\mathcal{L}(H)}\right)e^{-\alpha\delta T}\|\epsilon_{n-1}^{(k-1)}\|_{L^2(\Omega;H)}\\
&+ L_F\int_{T_{n-1}}^{T_n}e^{-\alpha(T_n-s)}\|G(s)\|_{L^2(\Omega;H)}ds
\end{align*}
with the notation $G(s):=u_{u_{n-1}^{(k-1)}}(s)-u_{u(T_{n-1})}(s)$. 
For operator $1-S_{\theta,k}$, we deduce
\begin{align*}
\|1-S_{\theta,k}\|_{\mathcal{L}(H)}
=\|S_{\theta,k}\|_{\mathcal{L}(H)}\|\bi\theta\delta TF'(\tau_k)\|_{\mathcal{L}(H)}
\le\theta L_F\delta T
\end{align*}
due to the fact $\|S_{\theta,k}\|_{\mathcal{L}(H)}<1.$

Moreover, according to the mild solution \eqref{mild}, we get for any $s\in[T_{n-1},T_{n}]$ that
\begin{align*}
\|G(s)\|_{L^2(\Omega;H)}=&\|u_{u_{n-1}^{(k-1)}}(s)-u_{u(T_{n-1})}(s)\|_{L^2(\Omega;H)}\\
\le&e^{-\alpha(s-T_{n-1})}\|\epsilon_{n-1}^{(k-1)}\|_{L^2(\Omega;H)}+ L_F\int_{T_{n-1}}^se^{-\alpha(s-r)}\|G(r)\|_{L^2(\Omega;H)}dr.
\end{align*}
Then the Gronwall inequality yields
\begin{align*}
\|G(s)\|_{L^2(\Omega;H)}\le& \left(1+L_F(s-T_{n-1})e^{L_F(s-T_{n-1})}\right)e^{-\alpha(s-T_{n-1})}\|\epsilon_{n-1}^{(k-1)}\|_{L^2(\Omega;H)}.
\end{align*}

Above estimations finally lead to
\begin{align*}
\|\epsilon_n^{(k)}\|_{L^2(\Omega;H)}
\le& \left(1+(1-\theta)L_F\delta T\right)e^{-\alpha\delta T}\|\epsilon_{n-1}^{(k)}\|_{L^2(\Omega;H)}\\
&+L_F\delta T\left(1+e^{L_F\delta T}\right)e^{-\alpha\delta T}\|\epsilon_{n-1}^{(k-1)}\|_{L^2(\Omega;H)}\\
=&:\gamma_1\|\epsilon_{n-1}^{(k)}\|_{L^2(\Omega;H)}+\gamma_2\|\epsilon_{n-1}^{(k-1)}\|_{L^2(\Omega;H)},
\end{align*}
where we have used the following estimation
\begin{align*}
&L_F\int_{T_{n-1}}^{T_n}\left(1+L_F(s-T_{n-1})e^{ L_F(s-T_{n-1})}\right)ds\\
=&L_F\delta Te^{L_F\delta T}+L_F\delta T+1-e^{L_F\delta T}
\le L_F\delta Te^{L_F\delta T}.
\end{align*}

Based on the arguments in Theorem \ref{tm3.2}, 
the error converge to zero as $k\to\infty$ if 
\begin{align*}
f(\theta)=\gamma_1+\gamma_2=\left(1+(2-\theta)L_F\delta T+L_F\delta Te^{L_F\delta T}\right)e^{-\alpha\delta T}<1.
\end{align*}
The convergence rate turns to be 
\begin{align*}
\|\varepsilon^{(k)}\|_{\infty}\le\left(\frac{\gamma_2}{1-\gamma_1}\right)^k\|\varepsilon^{(0)}\|_{\infty}=\left(\frac{f(\theta)-\gamma_1}{1-\gamma_1}\right)^k\|\varepsilon^{(0)}\|_{\infty}
<\left[f(\theta)\right]^k\|\varepsilon^{(0)}\|_{\infty}
\end{align*}
with $\varepsilon^{(k)}:=\left(\|\epsilon_1^{(k)}\|_{L^2(\Omega;H)},\cdots,\|\epsilon_n^{(k)}\|_{L^2(\Omega;H)}\right)^\top$. 

\begin{figure}[h]
\centering
\subfigure{
\begin{minipage}[t]{0.45\linewidth}
  \includegraphics[height=4.5cm,width=5.5cm]{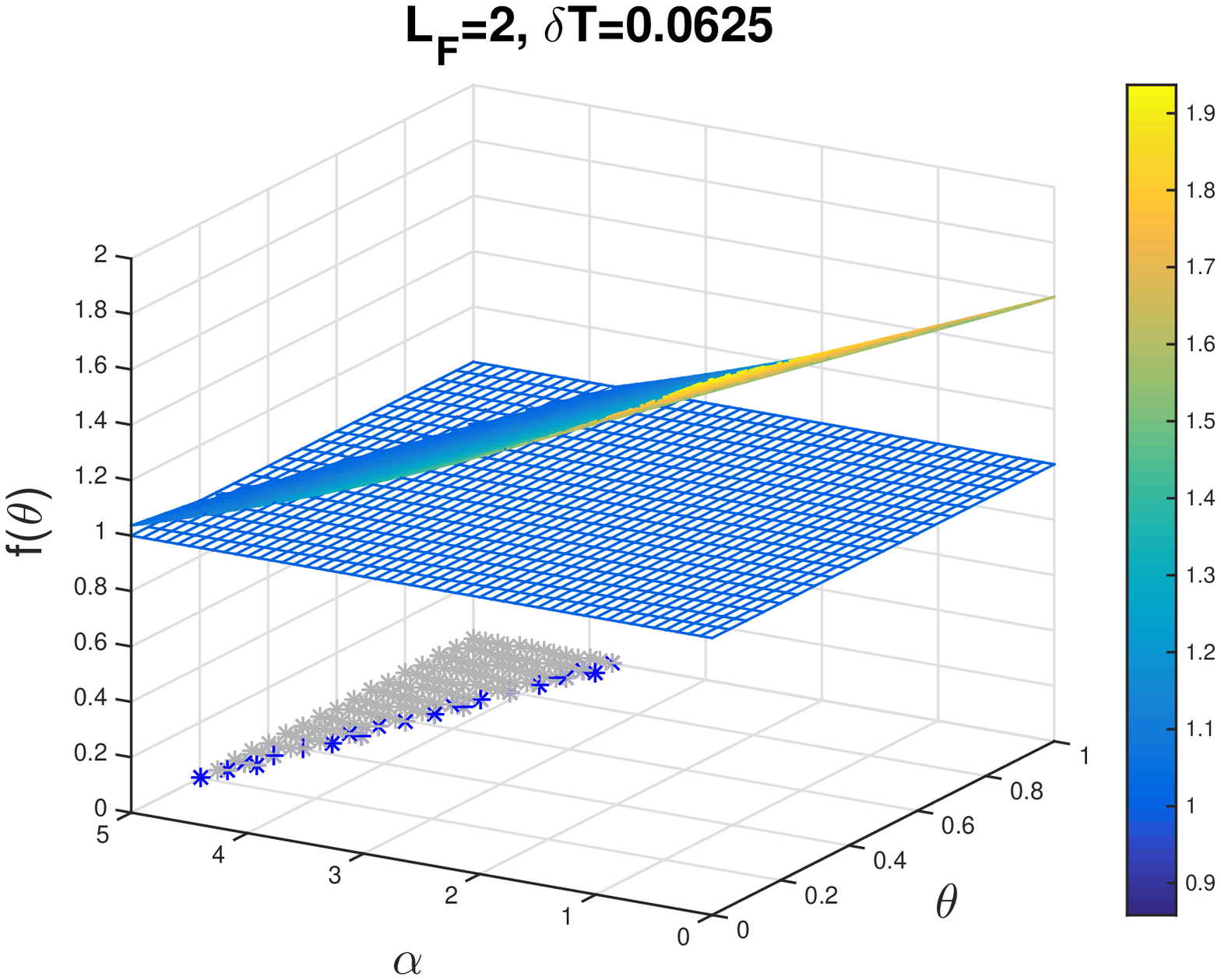}
  \end{minipage}
  }
  \subfigure{
  \begin{minipage}[t]{0.45\linewidth}
  \includegraphics[height=4.5cm,width=5.5cm]{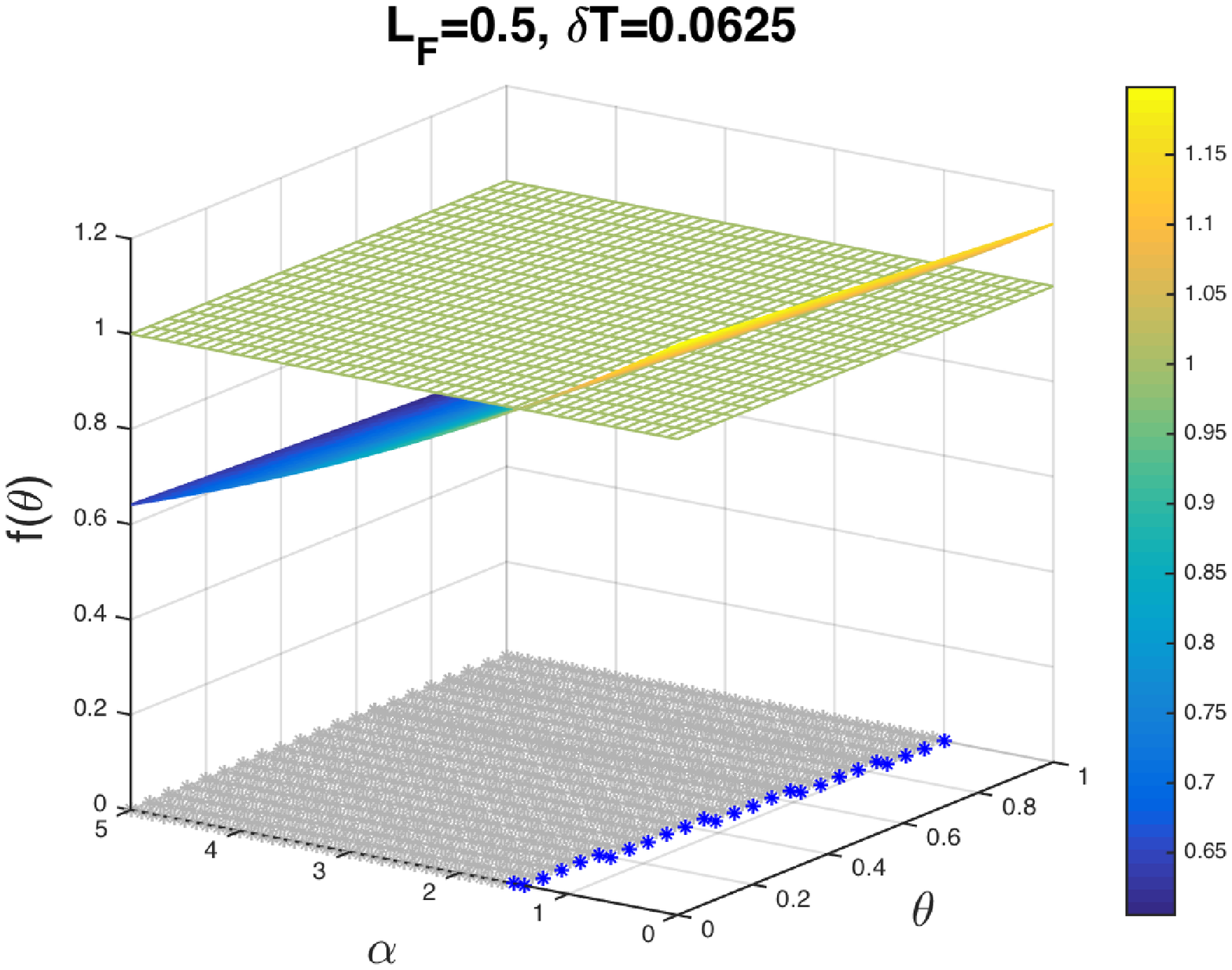}
  \end{minipage}
  }
  \caption{Convergence area (grey) of $f(\theta)$ vs. $\alpha$ and $\theta$.}
  \label{figstableNL}
\end{figure}
In addition, the fact $f'(\theta)=-L_F\delta Te^{-\alpha\delta T}<0$ indicates that the parareal exponential $\theta$-scheme converges faster when $\theta$ is larger, see Figure \ref{figstableNL}.
\end{proof}

\section{Numerical experiments}
\label{sec5}

This section is devoted to investigate the relationship between the convergence error and several parameters, i.e., $\alpha$, $\lambda$ and $\theta$, based on which we can find a proper number $k$ as the terminate iteration number for different cases.

We consider the linear equation \eqref{linearmodel} with initial value $u_0=0$.
Throughout the numerical experiments, we use the average of 1000 sample paths as an approximation of the expectation, and choose dimension $M=10$ for the spectral Galerkin approximation in spatial direction.

\begin{figure}[h]
\centering
\subfigure{
\begin{minipage}[t]{0.45\linewidth}
  \includegraphics[height=4.5cm,width=5.5cm]{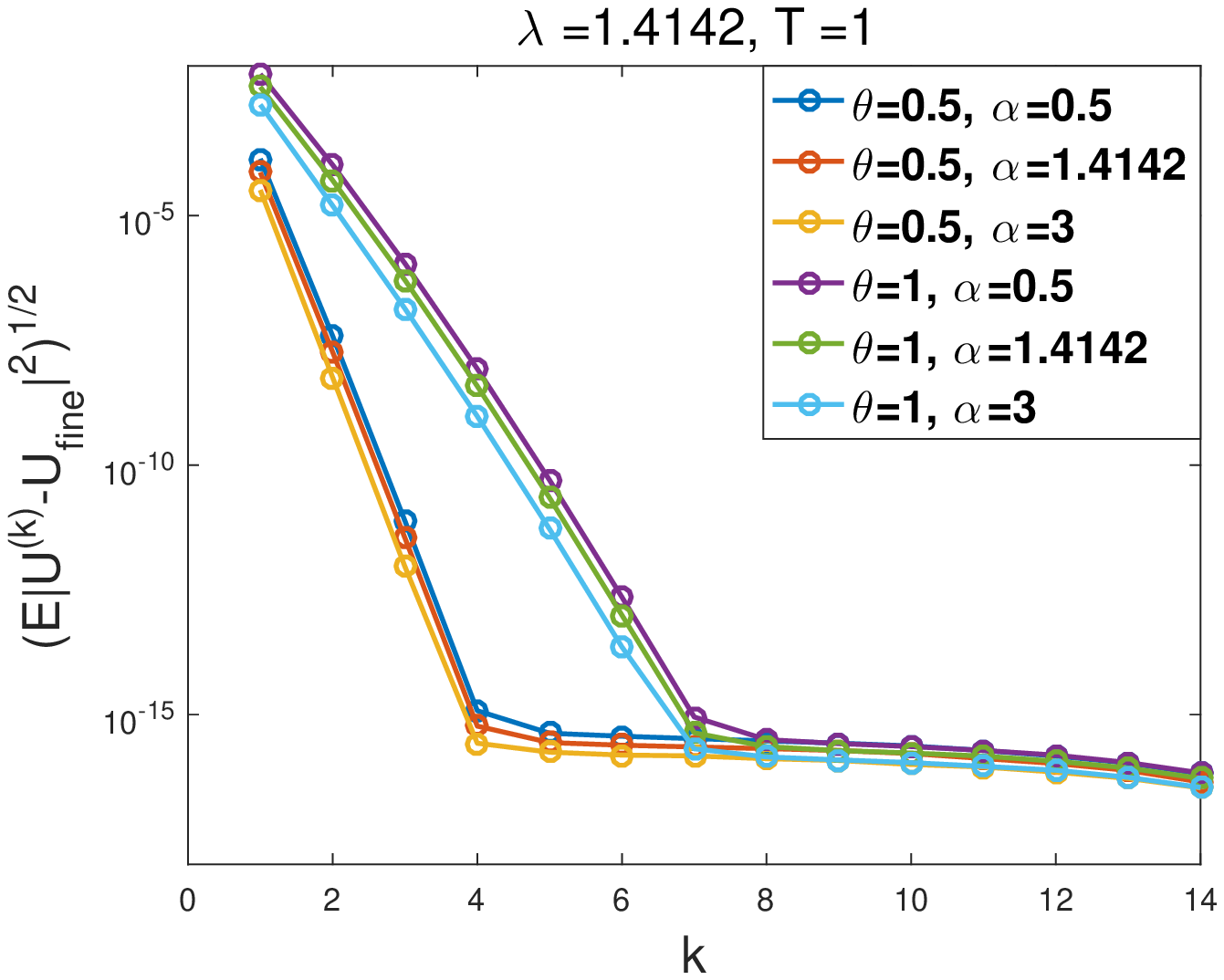}
  \end{minipage}
  }
  \subfigure{
  \begin{minipage}[t]{0.45\linewidth}
  \includegraphics[height=4.5cm,width=5.5cm]{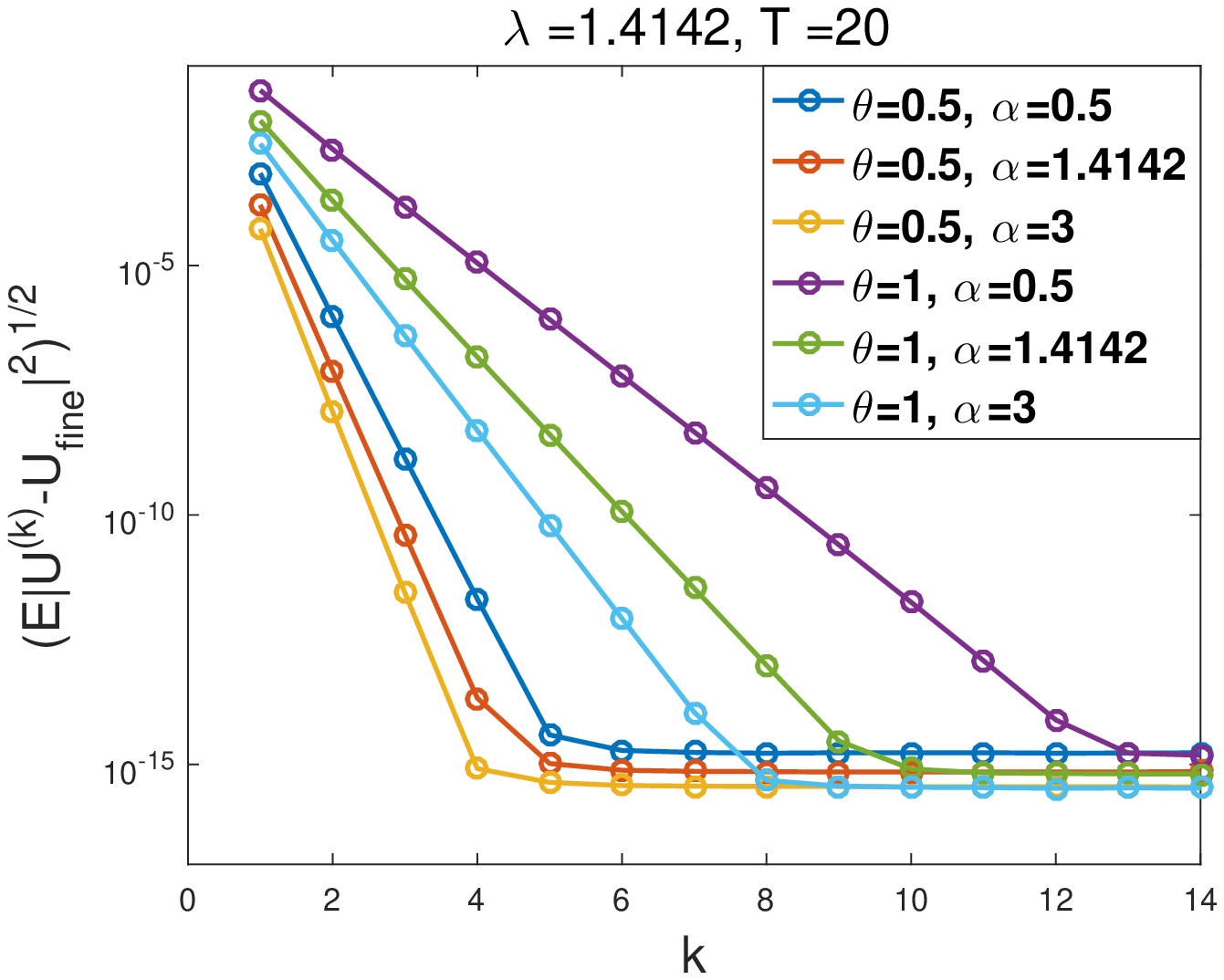}
  \end{minipage}
  }
  \caption{Mean square error $(\sup_{1\le n\le N}\E\|u_n^{(k)}-v_n\|^2)^\frac12$ vs. iteration number $k$ ($\lambda=\sqrt{2},\delta t=2^{-6}, J=4$).}
  \label{converge1}
\end{figure}

We get from Theorem \ref{tm3.2} that the time-uniform convergence holds for all $\lambda\in\R$ and $\alpha>0$ if $\theta\in[\frac12,1]$, which is illustrated in Figure \ref{converge1} for $\theta=0.5,1$ and time interval $T=1,20$. Figure \ref{converge1} shows the evolution of the mean square error $(\sup_{1\le n\le N}\E\|u_n^{(k)}-v_n\|^2)^\frac12$ with iteration number $k$. For $T=1$, the iteration number can be chosen as $k=4$ for $\theta=\frac12$ and $k=7$ when $\theta=1$, which coincides with the result that the convergence order is $2k$ instead of $k$ when $\theta=\frac12$. For larger time $T=20$, since the constant $C$ in Theorem \ref{tm3.2} is negatively correlated with $\alpha$ for $\theta\in[\frac12,1]$, the proposed algorithm also converges but with different iteration number $k$. 

\begin{figure}[h]
\centering
\subfigure{
\begin{minipage}[t]{0.45\linewidth}
  \includegraphics[height=4.5cm,width=5.5cm]{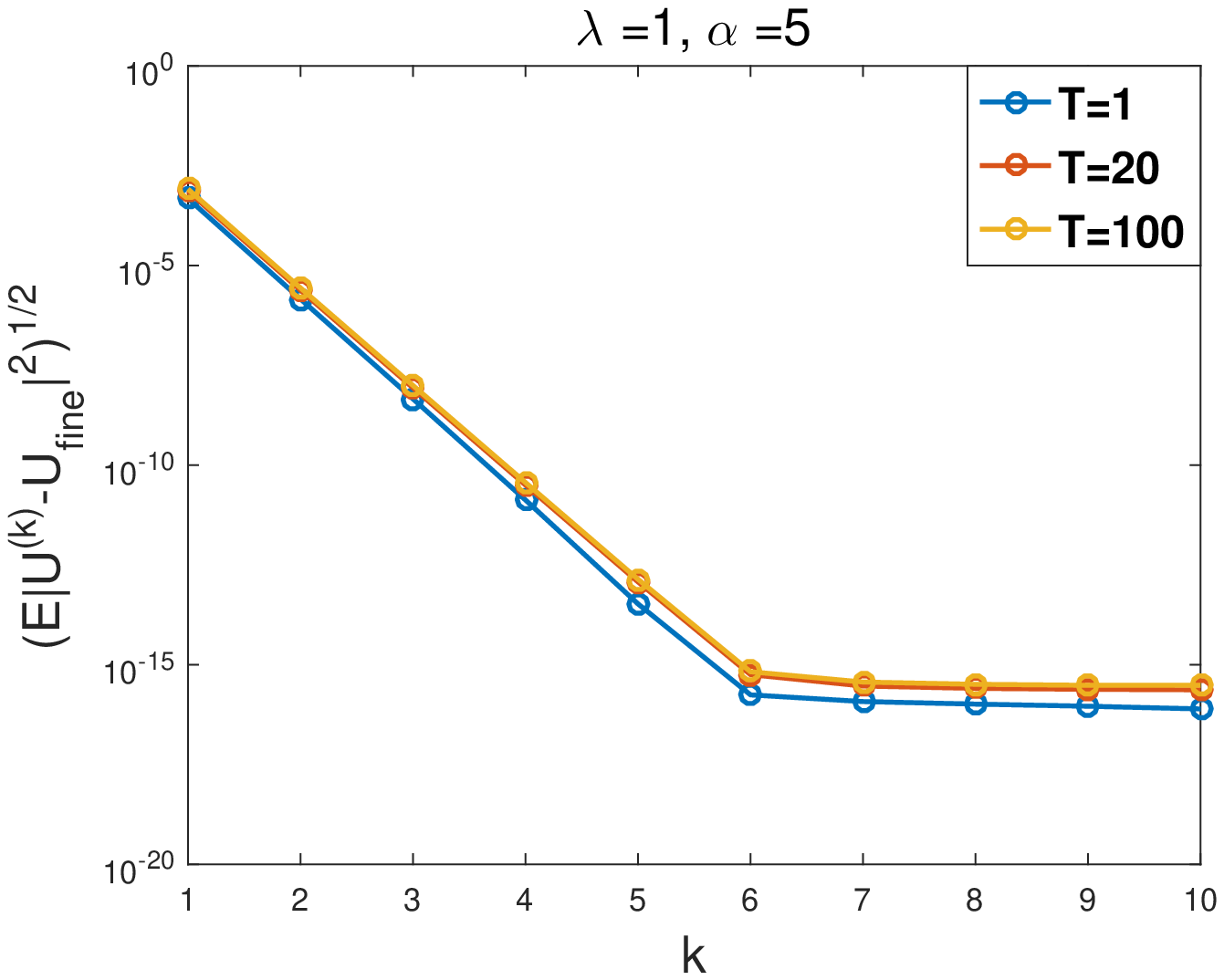}
  \end{minipage}
  }
  \subfigure{
  \begin{minipage}[t]{0.45\linewidth}
  \includegraphics[height=4.5cm,width=5.5cm]{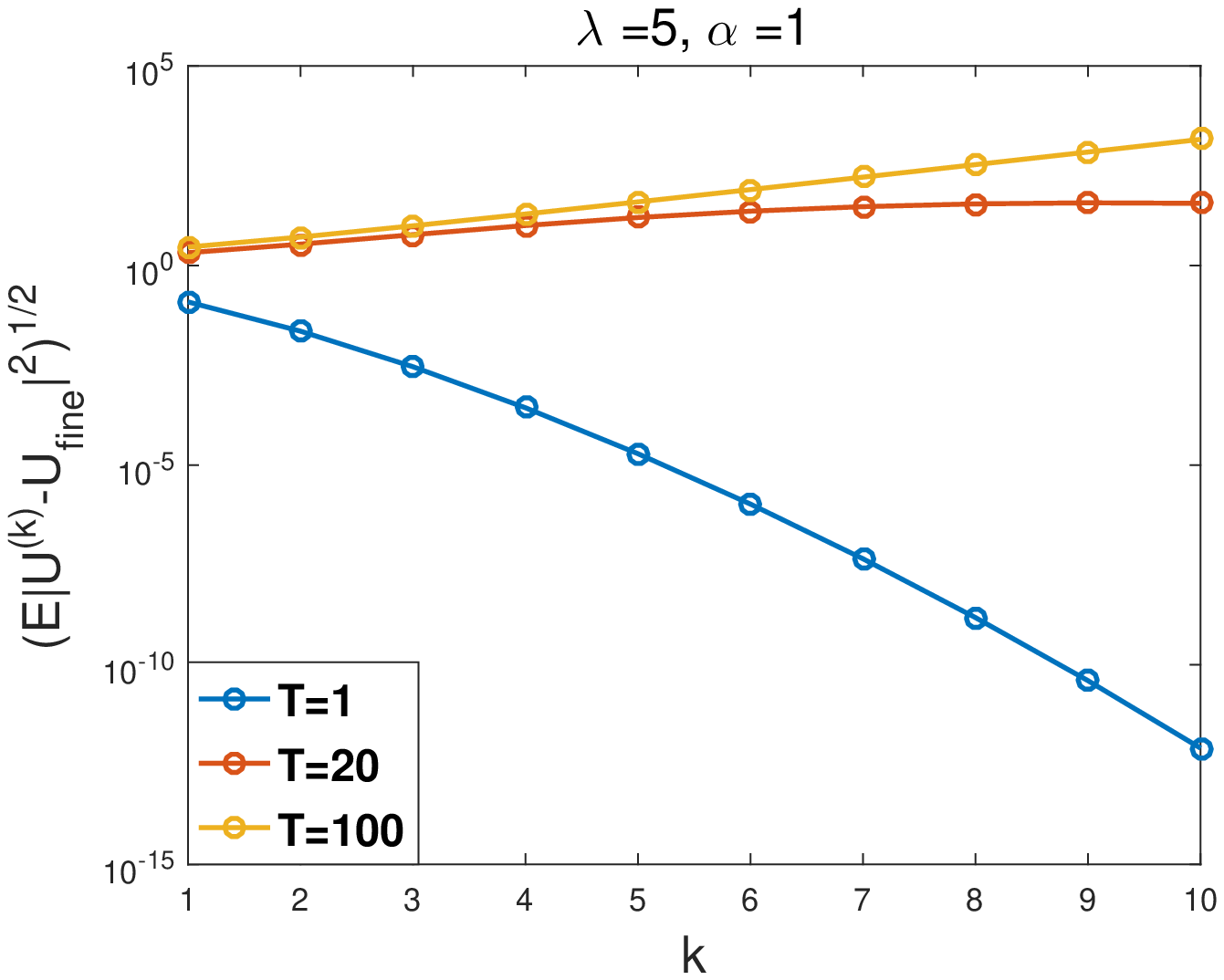}
  \end{minipage}
  }
  \caption{Mean square error $(\sup_{1\le n\le N}\E\|u_n^{(k)}-v_n\|^2)^\frac12$ vs. iteration number $k$.}
  \label{converge2}
\end{figure}

When $\theta\in[0,\frac12)$, the convergence result holds uniformly if $\alpha>\sqrt{\left(\frac12-\theta\right)}|\lambda|$ as stated in Theorem \ref{tm3.2}. Figure \ref{converge2} also shows evolution of the mean square error with respect to $k$ for $\theta=0$ and $T=1,20,100$. It can be find that if the condition $\alpha>\sqrt{\left(\frac12-\theta\right)}|\lambda|$ is not satisfied, e.g., $\lambda=5$, $\alpha=1$, the proposed algorithm diverges as time going larger. 

In particular, based on numerical experiments above, we now fix $k=3$ to verify the convergence order of the proposed scheme for different $\theta\in[0,1]$. 
Figure \ref{figorder} considers the convergence order of the proposed parareal algorithm for different $\lambda$ and $\alpha$ with fine step size $\delta t=2^{-8}$. The order turns to be $k$ for $\theta=0,0.4,0.55,0.9$, but increases to $2k$ when $\theta=\frac12$, which coincides with the result in Theorem \ref{tm3.2}.

\begin{figure}[h]
\centering
\subfigure{
\begin{minipage}[t]{0.45\linewidth}
  \includegraphics[height=4.5cm,width=5.5cm]{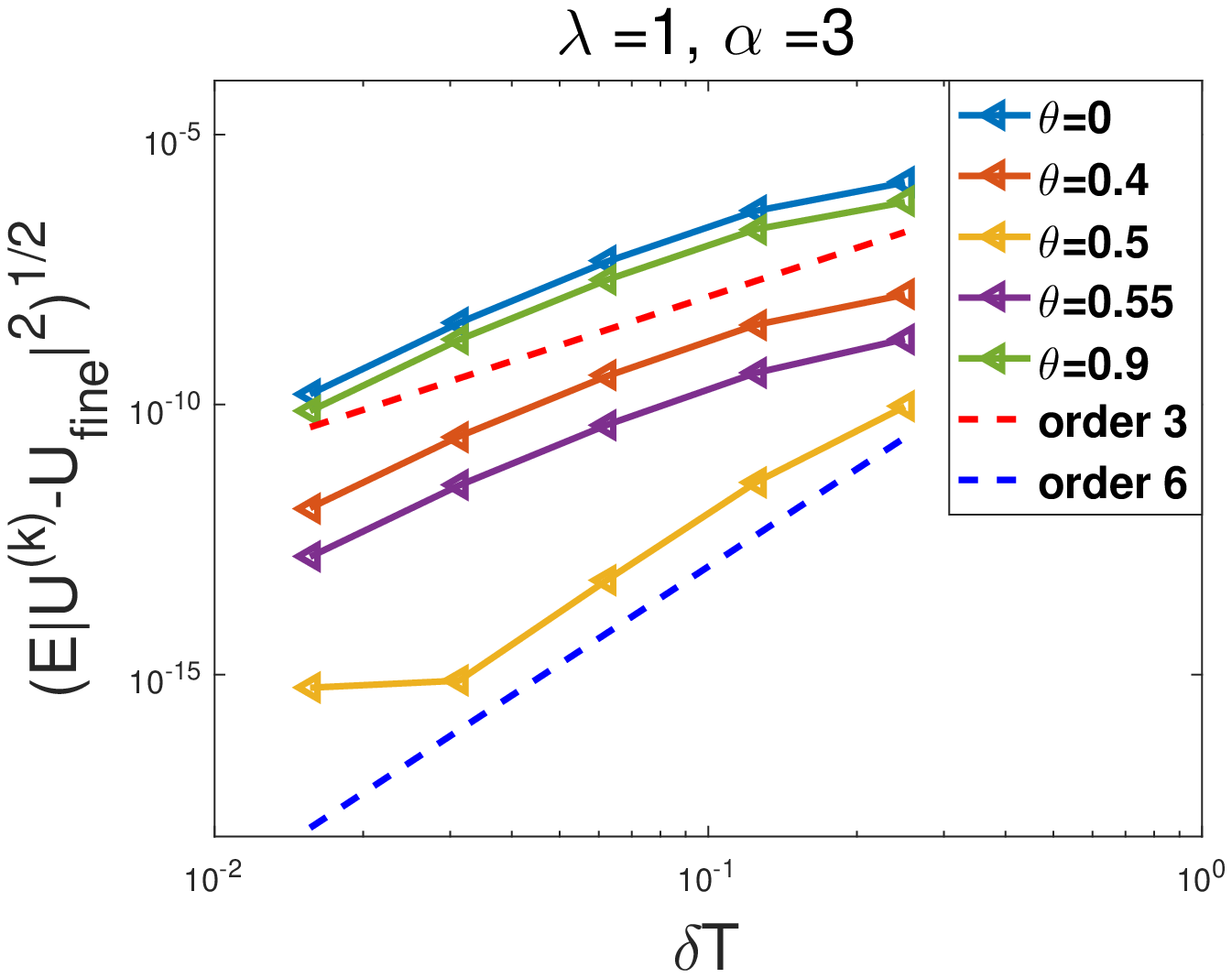}
  \end{minipage}
  }
  \subfigure{
  \begin{minipage}[t]{0.45\linewidth}
  \includegraphics[height=4.5cm,width=5.5cm]{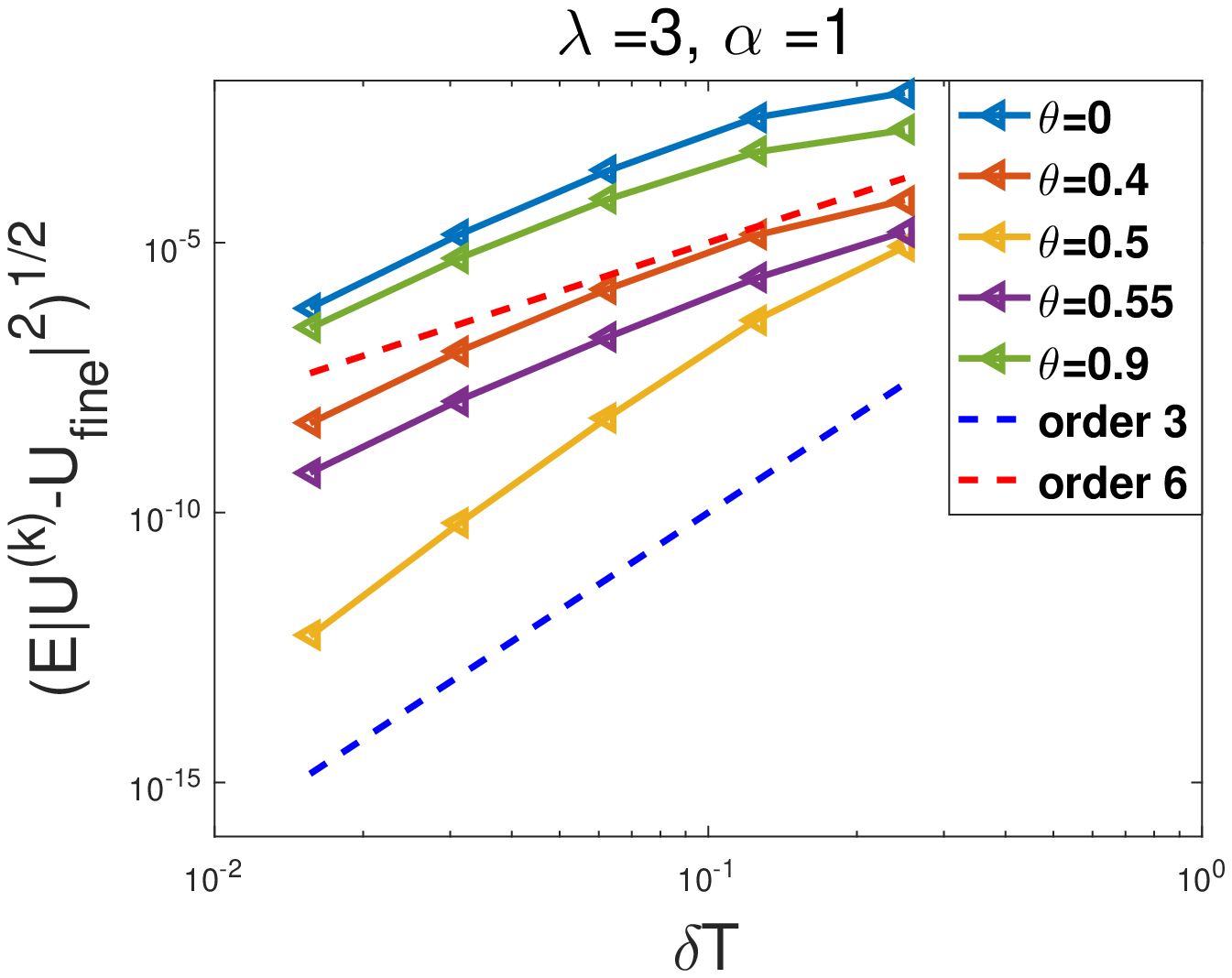}
  \end{minipage}
  }
  \caption{Mean square order with respect to $\delta T=2^{-i}$, $i=2,\cdots,6$.}
  \label{figorder}
\end{figure}

\section*{Appendix}
\subsection*{Proof of Theorem \ref{tm4.1}}
Since $\F$ is the exact propagator, it has the following expression 
\begin{align*}
\F(T_{n+1},T_{n},u_n^{(k-1)})=&S(\delta T)u_n^{(k-1)}+\bi\int_{T_n}^{T_{n+1}}S(T_{n+1}-s)F(u_{u_n^{(k-1)}}(s))ds\\
&+\int_{T_n}^{T_{n+1}}S(T_{n+1}-s)Q^\frac12dW(s),
\end{align*}
where $u_{u_n^{(k-1)}}(s)$ denotes the exact solution at time $s$ starting from $u_n^{(k-1)}$ at $T_n$.
Then algorithm \eqref{scheme} yields
\begin{align}\label{scheme2}
u_{n+1}^{(k)}=&S(\delta T)u_n^{(k)}+\bi  S(\delta T)F(u_n^{(k)})\delta T-\bi  S(\delta T)F(u_n^{(k-1)})\delta T\nonumber\\
&+\bi \int_{T_n}^{T_{n+1}}S(T_{n+1}-s)F(u_{u_n^{(k-1)}}(s))ds
+\int_{T_n}^{T_{n+1}}S(T_{n+1}-s)Q^\frac12dW(s),
\end{align}
compared with the exact solution
\begin{align*}
u(T_{n+1})=\F(T_{n+1},T_n,u(T_n)).
\end{align*}

Denoting the error $\epsilon_n^{(k)}:=u(T_n)-u_n^{(k)}$, we get
\begin{align*}
\epsilon_{n+1}^{(k)}=&S(\delta T)\epsilon_n^{(k)}
-\bi  S(\delta T)F(u_n^{(k)})\delta T
+\bi  S(\delta T)F(u_n^{(k-1)})\delta T\\
&+\bi \int_{T_n}^{T_{n+1}}S(T_{n+1}-s)F(u_{u(T_n)}(s))ds\\
&-\bi \int_{T_n}^{T_{n+1}}S(T_{n+1}-s)F(u_{u_n^{(k-1)}}(s))ds\\
=&S(\delta T)\epsilon_n^{(k)}
+\bi  S(\delta T)\left[F(u(T_n))-F(u_n^{(k)})\right]\delta T\\
&-\bi  S(\delta T)\left[F(u(T_n))-F(u_n^{(k-1)})\right]\delta T\\
&+\bi \int_{T_n}^{T_{n+1}}S(T_{n+1}-s)\left[F(u_{u(T_n)}(s))-F(u_{u_n^{(k-1)}}(s))\right]ds\\
=&:I+II-III+IV.
\end{align*}
Thus, the mean square error reads
\begin{align*}
\|\epsilon_{n+1}^{(k)}\|_{L^2(\Omega;H)}
\le\|I\|_{L^2(\Omega;H)}+\|II\|_{L^2(\Omega;H)}+\|III\|_{L^2(\Omega;H)}+\|IV\|_{L^2(\Omega;H)},
\end{align*}
where 
\begin{align}\label{I}
&\|I\|_{L^2(\Omega;H)}\le e^{-\alpha\delta T}\|\epsilon_n^{(k)}\|_{L^2(\Omega;H)},\\
&\|II\|_{L^2(\Omega;H)}\le   L_F\delta Te^{-\alpha\delta T}\|\epsilon_n^{(k)}\|_{L^2(\Omega;H)}
\end{align}
and
\begin{align}
\|III\|_{L^2(\Omega;H)}\le   L_F\delta Te^{-\alpha\delta T}\|\epsilon_n^{(k-1)}\|_{L^2(\Omega;H)}.
\end{align}

It then suffices to estimate term $IV$.
In fact, denoting $G(s):=u_{u(T_n)}(s)-u_{u_n^{(k-1)}}(s)$ and according to the mild solution \eqref{mild}, we obtain for any $s\in[T_n,T_{n+1}]$ that
\begin{align*}
\|G(s)\|_{L^2(\Omega;H)}=&\|u_{u(T_n)}(s)-u_{u_n^{(k-1)}}(s)\|_{L^2(\Omega;H)}\\
\le&e^{-\alpha(s-T_n)}\|\epsilon_n^{(k-1)}\|_{L^2(\Omega;H)}\\
&+ \left\|\int_{T_n}^sS(s-r)\left[F(u_{u(T_n)}(r))-F(u_{u_n^{(k-1)}}(r))\right]dr\right\|_{L^2(\Omega;H)}\\
\le&e^{-\alpha(s-T_n)}\|\epsilon_n^{(k-1)}\|_{L^2(\Omega;H)}+  L_F\int_{T_n}^se^{-\alpha(s-r)}\|G(r)\|_{L^2(\Omega;H)}dr.
\end{align*}
Then the Gronwall inequality yields
\begin{align*}
\|G(s)\|_{L^2(\Omega;H)}\le& \left(1+  L_F(s-T_n)e^{  L_F(s-T_n)}\right)e^{-\alpha(s-T_n)}\|\epsilon_n^{(k-1)}\|_{L^2(\Omega;H)}\\
\le&\left(1+  L_F\delta Te^{  L_F \delta T}\right)e^{-\alpha(s-T_n)}\|\epsilon_n^{(k-1)}\|_{L^2(\Omega;H)}.
\end{align*}
As a result,
\begin{align}\label{IV}
\|IV\|_{L^2(\Omega;H)}\le&  L_F\int_{T_n}^{T_{n+1}}e^{-\alpha(T_{n+1}-s)}\|G(s)\|_{L^2(\Omega;H)}ds\nonumber\\
\le&\left(1+  L_F\delta Te^{  L_F \delta T}\right)  L_F\delta Te^{-\alpha\delta T}\|\epsilon_n^{(k-1)}\|_{L^2(\Omega;H)}.
\end{align}
Based on estimations \eqref{I}--\eqref{IV} and the fact that $\epsilon_0^{(k)}=0$ for all $k\in\N$, we derive
for $n=1,\cdots,N-1$ that
\begin{align}\label{epsilonk2}
\|\epsilon_{n+1}^{(k)}\|_{L^2(\Omega;H)}
\le&(1+  L_F\delta T)e^{-\alpha\delta T}\|\epsilon_n^{(k)}\|_{L^2(\Omega;H)}\nonumber\\
&+\left(2+  L_F\delta Te^{  L_F \delta T}\right)  L_F\delta Te^{-\alpha\delta T}\|\epsilon_n^{(k-1)}\|_{L^2(\Omega;H)}\nonumber\\
\le&\left(2+  L_F\delta Te^{  L_F \delta T}\right)  L_F\delta Te^{-\alpha\delta T}\sum_{j=1}^n\left(\beta^{n-j}\|\epsilon_j^{(k-1)}\|_{L^2(\Omega;H)}\right)
\end{align}
with the notation $\beta:=(1+  L_F\delta T)e^{-\alpha\delta T}>0$.
Denoting the error vector 
\begin{align*}
\varepsilon^{(k)}:=\left(\|\epsilon_1^{(k)}\|_{L^2(\Omega;H)},\cdots,\|\epsilon_N^{(k)}\|_{L^2(\Omega;H)}\right)^\top
\end{align*} 
and the $N$-dimensional matrix (see also \cite{gander2007})
\begin{equation*}M(\beta)=
\left(
\begin{array}{cccccc}
0&0&\cdots &0& 0\\
1&0&\cdots & 0&0\\
\beta&1&\cdots &0&0\\
\beta^{2}&\beta&\cdots&0&0\\
\vdots & \vdots& \vdots &\vdots&\vdots \\
\beta^{N-2}&\beta^{N-3}&\cdots&1&0
\end{array}
\right),
\end{equation*}
we can rewrite \eqref{epsilonk2} as
\begin{align*}
\varepsilon^{(k)}\le C\delta Te^{-\alpha\delta T}M(\beta)\varepsilon^{(k-1)}
\le (C\delta Te^{-\alpha\delta T})^kM^k(\beta)\varepsilon^{(0)}.
\end{align*}

It is shown in \cite{gander2007} that  
\begin{align*}
\|M^k(\beta)\|_\infty\le\frac{(N-1)(N-2)\cdots(N-k)}{k!}(\beta\vee1)^{N-k-1}
\le\frac{N^k}{k!}\left(\beta^{N}\vee1\right),
\end{align*}
which leads to the first result in the theorem:
\begin{align*}
\|\varepsilon^{(k)}\|_\infty
\le& \left(C\delta Te^{-\alpha\delta T}\right)^k\frac{N^k}{k!}\left(\beta^{N}\vee1\right)\|\varepsilon^{(0)}\|_\infty\\
\le&\left(e^{-\alpha\delta T}\right)^k\frac{(CT)^k}{k!}\left(e^{(  L_F-\alpha)T}\vee1\right)\|\varepsilon^{(0)}\|_\infty.
\end{align*}

Note that the function $f(\delta T):=e^{-\alpha\delta T}-\delta T$ is continuous and takes value in $(e^{-\alpha}-1,1]$ for $\delta T\in[0,1)$. Hence, there exists some $\delta T_*=\delta T_*(\alpha)\in(0,1)$ such that $f(\delta T)\le0$ for any $\delta T\in[\delta T_*,1)$.
In fact, $\delta T_*$ satisfies that $\delta T_*^{-1}\ln\delta T_*^{-1}=\alpha$, which decreases when $\alpha$ increases.

\subsection*{Proof of Theorem \ref{tm4.2}}
Note that 
\begin{align}\label{vn0}
v_{n,0}=&v_{n-1,J}=\F_{I}(t_{n-1,J},t_{n-1,J-1},v_{n-1,J-1})\nonumber\\
=&S(\delta T)v_{n-1,0}+\bi \sum_{l=1}^JS(l\delta t)F(v_{n-1,J-l})\delta t+\sum_{l=1}^JS(l\delta t)Q^\frac12\delta_{n,J+1-l}W.
\end{align}
Similarly, we get
\begin{align}\label{hatu}
\hat u_{n-1,J}^{(k-1)}=&\F_{I}(t_{n-1,J},t_{n-1,J-1},\hat u_{n-1,J-1}^{(k-1)})\nonumber\\
=&S(\delta T)u_{n-1}^{(k-1)}+\bi \sum_{l=1}^JS(l\delta t)F(\hat u_{n-1,J-l}^{(k-1)})\delta t+\sum_{l=1}^JS(l\delta t)Q^\frac12\delta_{n,J+1-l}W.
\end{align}

In the following, we still denote the above error by $\epsilon_n^{(k)}:=u_n^{(k)}-v_{n,0}$ for convenience, which has the same symbol as in the proof of Theorem \ref{tm4.1} but with different meaning. 
Then we can decompose the error into several parts
\begin{align*}
\epsilon_{n}^{(k)}
=&\left(\G_{I}(T_{n},T_{n-1},u_{n-1}^{(k)})-v_{n,0}\right)
-\left(\G_{I}(T_{n},T_{n-1},u_{n-1}^{(k-1)})-v_{n,0}\right)+\hat u_{n-1,J}^{(k-1)}-v_{n,0}\\
=&S(\delta T)\epsilon_{n-1}^{(k)}+\bi  \left(S(\delta T)F(u_{n-1}^{(k)})\delta T
-S(\delta T)F(v_{n-1,0})\delta T\right)\\
&-\bi \left(S(\delta T)F(u_{n-1}^{(k-1)})\delta T
-S(\delta T)F(v_{n-1,0})\delta T\right)\\
&+\bi \left(\sum_{l=1}^JS(l\delta t)F(\hat u_{n-1,J-l}^{(k-1)})\delta t-\sum_{l=1}^JS(l\delta t)F(v_{n-1,J-l})\delta t\right)\\
=&:\tilde{I}+\tilde{II}-\tilde{III}+\tilde{IV}
\end{align*}
according to \eqref{vn0} and \eqref{hatu}.
For the first three terms, we derive
\begin{align*}
\|\tilde{I}\|_{L^2(\Omega;H)}\le& e^{-\alpha\delta T}\|\epsilon_{n-1}^{(k)}\|_{L^2(\Omega;H)},\\
\|\tilde{II}\|_{L^2(\Omega;H)}\le&   L_F\delta T e^{-\alpha\delta T}\|\epsilon_{n-1}^{(k)}\|_{L^2(\Omega;H)}
\end{align*}
and
\begin{align*}
\|\tilde{III}\|_{L^2(\Omega;H)}\le C\delta T e^{-\alpha\delta T}\|\epsilon_{n-1}^{(k-1)}\|_{L^2(\Omega;H)}.
\end{align*}

To get the estimation of term $\tilde{IV}$, we define 
$\tilde{G}_{j}:=\hat u_{n-1,j}^{(k-1)}-v_{n-1,j}$ for any $j=0,\cdots,J$, 
then \eqref{vn0} and \eqref{hatu} yields
\begin{align*}
\|\tilde G_{j}\|^2_{L^2(\Omega;H)}=&\Big\|S(\delta T)u_{n-1}^{(k-1)}+\bi \sum_{l=1}^{j}S(l\delta t)F(\hat u_{n-1,j-l}^{(k-1)})\delta t\\
&-\Big(S(\delta T)v_{n-1,0}+\bi \sum_{l=1}^{j}S(l\delta t)F(v_{n-1,j-l})\delta t\Big)\Big\|^2_{L^2(\Omega;H)}\\
\le&2e^{-2\alpha\delta T}\|\epsilon_{n-1}^{(k-1)}\|^2_{L^2(\Omega;H)}
+2j\delta t^2L_F^2\sum_{l=1}^je^{-2\alpha l\delta t}\|\tilde G_{j-l}\|^2_{L^2(\Omega;H)}\\
\le&2e^{-2\alpha\delta T}\|\epsilon_{n-1}^{(k-1)}\|^2_{L^2(\Omega;H)}
+2\delta T\delta tL_F^2\sum_{m=0}^{j-1}e^{-2\alpha (j-m)\delta t}\|\tilde G_{m}\|^2_{L^2(\Omega;H)}.
\end{align*}
Equivalently, it can be written as 
\begin{align*}
&e^{2\alpha j\delta t}\|\tilde G_{j}\|^2_{L^2(\Omega;H)}\\
\le&2e^{-2\alpha(\delta T-j\delta t)}\|\epsilon_{n-1}^{(k-1)}\|^2_{L^2(\Omega;H)}
+2\delta T\delta tL_F^2\sum_{m=0}^{j-1}e^{2\alpha m\delta t}\|\tilde G_{m}\|^2_{L^2(\Omega;H)}.
\end{align*}
According to the discrete Gronwall inequality, we get
\begin{align*}
&\|\tilde G_{j}\|^2_{L^2(\Omega;H)}\\
\le&2e^{-2\alpha\delta T}\|\epsilon_{n-1}^{(k-1)}\|^2_{L^2(\Omega;H)}\Big(1
+e^{-2\alpha j\delta t}\sum_{0\le m<j}e^{2\alpha m\delta t}2\delta T\delta tL_F^2(1+2\delta T\delta tL_F^2)^{j-m-2}\Big)\\
\le&Ce^{-2\alpha\delta T}\|\epsilon_{n-1}^{(k-1)}\|^2_{L^2(\Omega;H)}
\end{align*}
with $C$ independent of $j$. Hence,
\begin{align*}
\|\tilde{IV}\|^2_{L^2(\Omega;H)}
\le&\delta T\delta tL_F^2\sum_{l=1}^Je^{-2\alpha l\delta t}\|\tilde G_{J-l}\|^2_{L^2(\Omega;H)}\\
\le&C\delta T^2e^{-2\alpha\delta T}\|\epsilon_{n-1}^{(k-1)}\|^2_{L^2(\Omega;H)}.
\end{align*}

In conclusion, we get
\begin{align*}
\|\epsilon_{n}^{(k)}\|^2_{L^2(\Omega;H)}
\le(1+  L_F\delta T)e^{-\alpha\delta T}\|\epsilon_{n-1}^{(k)}\|^2_{L^2(\Omega;H)}+C\delta T e^{-\alpha\delta T}\|\epsilon_{n-1}^{(k-1)}\|^2_{L^2(\Omega;H)},
\end{align*}
which leads to the final results based on the procedure in the proof of Theorem \ref{tm4.1}.

\bibliography{parareal}

\begin{thebibliography}{10}

\bibitem{andersen1995}
H.~H. Andersen, M.~H{\o}jbjerre, D.~S{\o}rensen, and P.~S. Eriksen.
\newblock {\em Linear and graphical models}, volume 101 of {\em Lecture Notes
  in Statistics}.
\newblock Springer-Verlag, New York, 1995.
\newblock For the multivariate complex normal distribution.

\bibitem{B05}
G.~Bal.
\newblock On the convergence and the stability of the parareal algorithm to
  solve partial differential equations.
\newblock In {\em Domain decomposition methods in science and engineering},
  volume~40 of {\em Lect. Notes Comput. Sci. Eng.}, pages 425--432. Springer,
  Berlin, 2005.

\bibitem{bal2006}
G.~Bal.
\newblock {Parallelization in time of (stochastic) ordinary differential
  equations}.
\newblock {\em Preprint}, 2006.

\bibitem{BV16}
C-E. Br\'ehier and G.~Vilmart.
\newblock High order integrator for sampling the invariant distribution of a
  class of parabolic stochastic {PDE}s with additive space-time noise.
\newblock {\em SIAM J. Sci. Comput.}, 38(4):A2283--A2306, 2016.

\bibitem{wang2017}
C.~Chen, J.~Hong, and X.~Wang.
\newblock Approximation of invariant measure for damped stochastic nonlinear
  {S}chr\"odinger equation via an ergodic numerical scheme.
\newblock {\em Potential Anal.}, 46(2):323--367, 2017.

\bibitem{DZ96}
G.~Da~Prato and J.~Zabczyk.
\newblock {\em Ergodicity for infinite-dimensional systems}, volume 229 of {\em
  London Mathematical Society Lecture Note Series}.
\newblock Cambridge University Press, Cambridge, 1996.

\bibitem{DD04}
A.~De~Bouard and A.~Debussche.
\newblock A semi-discrete scheme for the stochastic nonlinear {S}chr\"odinger
  equation.
\newblock {\em Numer. Math.}, 96(4):733--770, 2004.

\bibitem{DD06}
A.~De~Bouard and A.~Debussche.
\newblock Weak and strong order of convergence of a semidiscrete scheme for the
  stochastic nonlinear {S}chr\"odinger equation.
\newblock {\em Appl. Math. Optim.}, 54(3):369--399, 2006.

\bibitem{debussche2005}
A.~Debussche and C.~Odasso.
\newblock Ergodicity for a weakly damped stochastic non-linear {S}chr\"odinger
  equation.
\newblock {\em J. Evol. Equ.}, 5(3):317--356, 2005.

\bibitem{EKZ17}
I.~Ekren, I.~Kukavica, and M.~Ziane.
\newblock Existence of invariant measures for the stochastic damped
  {S}chr\"odinger equation.
\newblock {\em Stoch. Partial Differ. Equ. Anal. Comput.}, 5(3):343--367, 2017.

\bibitem{E09}
S.~Engblom.
\newblock Parallel in time simulation of multiscale stochastic chemical
  kinetics.
\newblock {\em Multiscale Model. Simul.}, 8(1):46--68, 2009.

\bibitem{gander2014}
M.~J. Gander and E.~Hairer.
\newblock Analysis for parareal algorithms applied to {H}amiltonian
  differential equations.
\newblock {\em J. Comput. Appl. Math.}, 259(part A):2--13, 2014.

\bibitem{gander2007}
M.~J. Gander and S.~Vandewalle.
\newblock Analysis of the parareal time-parallel time-integration method.
\newblock {\em SIAM J. Sci. Comput.}, 29(2):556--578, 2007.

\bibitem{HSW17}
J.~Hong, L.~Sun, and X.~Wang.
\newblock High {O}rder {C}onformal {S}ymplectic and {E}rgodic {S}chemes for the
  {S}tochastic {L}angevin {E}quation via {G}enerating {F}unctions.
\newblock {\em SIAM J. Numer. Anal.}, 55(6):3006--3029, 2017.

\bibitem{lions01}
J-L. Lions, Y.~Maday, and G.~Turinici.
\newblock R\'esolution d'{EDP} par un sch\'ema en temps ``parar\'eel''.
\newblock {\em C. R. Acad. Sci. Paris S\'er. I Math.}, 332(7):661--668, 2001.

\bibitem{MT02}
Y.~Maday and G.~Turinici.
\newblock A parareal in time procedure for the control of partial differential
  equations.
\newblock {\em C. R. Math. Acad. Sci. Paris}, 335(4):387--392, 2002.

\bibitem{ML67}
W.~L. Miranker and W.~Liniger.
\newblock Parallel methods for the numerical integration of ordinary
  differential equations.
\newblock {\em Math. Comp.}, 21:303--320, 1967.

\bibitem{R09}
Andreas R\"o$\ss$ler.
\newblock Second order {R}unge-{K}utta methods for {I}t\^o stochastic
  differential equations.
\newblock {\em SIAM J. Numer. Anal.}, 47(3):1713--1738, 2009.

\bibitem{SR05}
G.~A. Staff and E.~M. R\o~nquist.
\newblock Stability of the parareal algorithm.
\newblock In {\em Domain decomposition methods in science and engineering},
  volume~40 of {\em Lect. Notes Comput. Sci. Eng.}, pages 449--456. Springer,
  Berlin, 2005.

\end{thebibliography}
\bibliographystyle{plain}

\end{document}